\newtheorem{theorem}{Theorem}[section]
\newtheorem{lemma}[theorem]{Lemma}
\newtheorem{proposition}[theorem]{Proposition}
\theoremstyle{definition}
\theoremstyle{remark}
\numberwithin{equation}{section}
\begin{document}

\title[Fock space]
{Some Characterizations for Composition Operators on the Fock Space}

\author[L. Jiang]{Liangying Jiang}
\address{
Liangying Jiang\\
Department of Statistics and  Mathematics,
Shanghai Finance University,
Shanghai 201209,
P. R. China}
\email{liangying1231@163.com, jiangly@sfu.edu.cn}

\author[G. T. Prajitura]
{Gabriel T. Prajitura}
\address{
Gabriel T. Prajitura\\
Department of Mathematics,
SUNY Brockport,
Brockport,
\linebreak
NY 14420,
USA}
\email{gprajitu@brockport.edu}

\author[R. Zhao]
{Ruhan Zhao}
\address{
Ruhan Zhao\\
Department of Mathematics,
SUNY Brockport,
Brockport, NY 14420,
USA}
\email{rzhao@brockport.edu}

\subjclass[2010]{Primary 47B33; Secondary 32A35.}

\keywords{Composition operators, Fock space, essential norm, spectrum, normal, cyclic, Schatten class}

\date{21 August 2016.
\newline \indent This  is supported by the National
Natural Science Foundation of China (No.11571256 and No.11601400)}

\begin{abstract}
We study composition operators on the Fock spaces $\mathcal{F}^2_\alpha(\mathbb{C}^n)$, problems considered
include the essential norm, normality, spectra, cyclicity and
membership in the Schatten classes. We give  perfect answers for these basic properties , which present lots of  different  characterizations 
with the composition operators on 
the  Hardy space or  the weighted  Bergman spaces.
\end{abstract}

\maketitle

\section{Introduction and preliminaries}\label{Sec1}

Let $z=(z_1, \ldots, z_n)$  and $w=(w_1, \ldots, w_n)$ be the points in  $\mathbb{C}^n$, the inner product is $$\langle z, w\rangle=\sum_{j=1}^n z_j \overline{w_j}$$ and $|z|=\sqrt{\langle z, z\rangle}$.

For any $\alpha>0$, consider the Gaussian probability measure $$dv_\alpha(z)=\biggl(\frac{\alpha}{\pi}\biggr)^ne^{-\alpha|z|^2}dv(z)$$
on $\mathbb{C}^n$, where  $dv$ is the Lebesgue volume  measure on  $\mathbb{C}^n$.
The Fock space $\mathcal{F}^2_\alpha(\mathbb{C}^n)$ consists of all holomorphic
functions $f$ on $\mathbb{C}^n$  with
$$\|f\|^2_\alpha\equiv \int_{\mathbb{C}^n}|f(z)|^2dv_\alpha(z)
<\infty.
$$
Thus, $\mathcal{F}^2_\alpha(\mathbb{C}^n)$ is a Hilbert space with the following inner product
$$\langle f, g\rangle_\alpha=\int_{\mathbb{C}^n}f(z) \overline{g(z)}dv_\alpha(z).$$
Its reproducing kernels are given by
$$ K(z, w)=K_w(z)=e^{\alpha\langle z,w\rangle}$$ with $\|K_w\|^2_\alpha=\exp(\alpha|z|^2)$.

For a given holomorphic mapping $\varphi:\mathbb{C}^n\rightarrow \mathbb{C}^n$,
the composition operator $C_\varphi$ on the Fock spaces $\mathcal{F}^2_\alpha(\mathbb{C}^n)$ is
defined by $C_\varphi(f)=f\circ\varphi$.

In 2003, Carswell et al. \cite{CMS} first studied  composition operators on the classical Fock space $\mathcal{F}^2_\alpha(\mathbb{C}^n)$ when $\alpha=1/2$, usually denoted by $\mathcal{F}^2(\mathbb{C}^n)$.
They found the following information.

{\bf Theorem A.}
{\it Suppose $\varphi:\mathbb{C}^n\rightarrow \mathbb{C}^n$ is a holomorphic mapping.
\begin{itemize}
\item[(a)] $C_\varphi$ is bounded on $\mathcal{F}^2(\mathbb{C}^n)$ if and only if $\varphi(z)=Az+B$,
where $A$ is an $n\times n$ matrix and $B$ is
an $n\times 1$ vector. Furthermore, $\|A\|\le 1$,
and $\langle A\zeta,B\rangle=0$ if $|A\zeta|=|\zeta|$ for some $\zeta\in \mathbb{C}^n$.
\item[(b)] $C_\varphi$ is compact on $\mathcal{F}^2(\mathbb{C}^n)$ if and only if
$\varphi(z)=Az+B$, where $\|A\|< 1$.
\end{itemize}}

From this, we know that only a class of linear mappings of $\mathbb{C}^n$
can induce bounded composition operators on $\mathcal{F}^2(\mathbb{C}^n)$. In fact, this is the same on the
 Fock space $\mathcal{F}^2_\alpha(\mathbb{C}^n)$ for any $\alpha>0$. So, it is natural to
look forward  that more properties of bounded composition operators
on $\mathcal{F}^2_\alpha(\mathbb{C}^n)$ could be completely characterized.

In recent years, the study of composition operators and weighted composition operators on  Fock spaces has attracted a lot of attention (\cite{CCK}, \cite{CIK}, \cite{SS09},  \cite{U07}, \cite{LZ}).
However, some basic problems about composition operators on the Fock spaces $\mathcal{F}^2_\alpha(\mathbb{C}^n)$ are still open. Only in the setting of
 complex plane $\mathbb{C}$, Guo and Izuchi \cite{GI} described   some properties of composition operators
on Fock type spaces, including spectra, cyclicity and
connected components of the set of composition operators.

In this work, we try to investigate the basic operator  properties of composition operators
on  $\mathcal{F}^2_\alpha(\mathbb{C}^n)$, and  find that some behaviors are so distinctive. For simplicity, we will discuss our results on the
classical Fock space  $\mathcal{F}^2(\mathbb{C}^n)$. But all results hold on  Fock  spaces $\mathcal{F}^2_\alpha(\mathbb{C}^n)$ for all $\alpha>0$.

Let $\varphi$ be a holomorphic mapping of $\mathbb{C}^n$, which induces a bounded composition operator $C_\varphi$ on  $\mathcal{F}^2(\mathbb{C}^n)$.
First, we will calculate the essential norm $\|C_\varphi\|_e$ in Section 2. When $C_\varphi$ is not compact, we find that $\|C_\varphi\|_e=\|C_\varphi\|$.
In this section, we  also discuss the membership in the Schatten classes and
obtain that all compact composition operators belong to the Schatten $p$-class $S_p$ for all $0<p<\infty$.
Section 3 is devoted to  describe the normality of $C_\varphi$.  It is  interesting that when $C_\varphi$ is hypernormal on $\mathcal{F}^2(\mathbb{C}^n)$,
then $C_\varphi$ must be normal. Moreover, there are no non-trivial essentially normal composition operators on $\mathcal{F}^2(\mathbb{C}^n)$. All above results are different to that when  $C_\varphi$ acting on other classical function spaces, such as the Hardy space and the weighted Bergman spaces.

In Section 4, we completely give  the spectrum of  $C_\varphi$ on  $\mathcal{F}^2(\mathbb{C}^n)$, that is, $$\sigma(C_\varphi)=\overline{\{\lambda_1^{\gamma_1} \cdots \lambda_n^{\gamma_n}: (\gamma_1, \ldots, \gamma_n) \in \mathbb{N}^n\}},$$
where $\lambda_1, \ldots, \lambda_n$ are the eigenvalues of the matrix $A$ (which described in Theorem A).
Finally, the cyclicity of composition operators will be studied in Section 5.
We observe that all bounded composition operators are not supercyclic  on $\mathcal{F}^2(\mathbb{C}^n)$, and give
 a necessary and sufficient condition for $C_\varphi$ to be cyclic when $\varphi$ is  unitary. Thus, it remains that, whether $C_\varphi$ is cyclic is unknown if $\varphi$ is univalent, not unitary.

\section { Essential norm and Schatten class}

When  $C_\varphi$  is bounded on the Fock space $\mathcal{F}^2(\mathbb{C}^n)$, by Theorem A, we have $\varphi(z)=Az+B$.
The norm of $C_\varphi$ has been obtained by  Carswell et al. \cite{CMS} as the following:
$$
\|C_\varphi\|=\exp\biggl(\frac{1}{4}(|z_0|^2-|Az_0|^2+|B|^2)\biggr),
$$
where $z_0$ is any solution to $(I-A^*A)z=A^*B$.

In this section, we first determine the essential norm of  $C_\varphi$, which
 defined as:
$$\|C_\varphi\|_e=\|C^\ast_\varphi\|_e=\inf\{\|C^\ast_\varphi-F\|: F\ \mbox{is compact on} \ \mathcal{F}^2(\mathbb{C}^n)\}.$$

\begin{theorem}\label{the 2.1}
Let $\varphi(z)=Az+B$ and $C_\varphi$ be bounded, not compact, on $\mathcal{F}^2(\mathbb{C}^n)$.
Then the essential norm of $C_\varphi$ is
$$
\|C_\varphi\|_e=\|C_\varphi\|=\exp\biggl(\frac{1}{4}\langle \varphi(z_0), B\rangle\biggr),
$$
where $z_0$ is any solution to $(I-A^*A)z=A^*B$.
\end{theorem}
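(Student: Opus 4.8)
The plan is to establish the two inequalities $\|C_\varphi\|_e\le\|C_\varphi\|$ and $\|C_\varphi\|_e\ge\|C_\varphi\|$ separately; the first is immediate from the definition of the essential norm (take $F=0$). I would first record the elementary identity reconciling the two displayed formulas for the norm. Rearranging $(I-A^*A)z_0=A^*B$ gives $z_0=A^*\varphi(z_0)$, whence $|z_0|^2=\langle A^*\varphi(z_0),z_0\rangle=\langle\varphi(z_0),Az_0\rangle=|Az_0|^2+\langle B,Az_0\rangle$. Thus $\langle B,Az_0\rangle=|z_0|^2-|Az_0|^2$ is real, and $\langle\varphi(z_0),B\rangle=\langle Az_0,B\rangle+|B|^2=|z_0|^2-|Az_0|^2+|B|^2$. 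Hence $\exp\bigl(\frac14\langle\varphi(z_0),B\rangle\bigr)$ is exactly the norm formula of Carswell et al., and it suffices to prove $\|C_\varphi\|_e\ge\|C_\varphi\|$.

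For the lower bound I would apply the standard weakly-null test-sequence argument to the adjoint, which is natural since the essential norm is defined through $C^*_\varphi$. Using $C^*_\varphi K_w=K_{\varphi(w)}$, the normalized kernels $k_w=K_w/\|K_w\|$ satisfy $\|C^*_\varphi k_w\|=\|K_{\varphi(w)}\|/\|K_w\|=\exp\bigl(\frac14(|\varphi(w)|^2-|w|^2)\bigr)$. If I can exhibit points $w_t$ with $|w_t|\to\infty$ along which this quantity is identically $\|C_\varphi\|$, then for any compact $F$ one has $\|C^*_\varphi-F\|\ge\limsup_t\|(C^*_\varphi-F)k_{w_t}\|\ge\|C_\varphi\|$, because $k_{w_t}\to0$ weakly forces $\|Fk_{w_t}\|\to0$; taking the infimum over $F$ then yields $\|C_\varphi\|_e=\|C^*_\varphi\|_e\ge\|C_\varphi\|$.

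The key is to choose $w_t=z_0+t\zeta$, where $\zeta$ is a unit vector in $E:=\ker(I-A^*A)=\{v:|Av|=|v|\}$. Since $C_\varphi$ is not compact, part (b) of Theorem A gives $\|A\|=1$, so $E\ne\{0\}$ and such a $\zeta$ exists; moreover $|w_t|^2=|z_0|^2+2t\,\re\langle z_0,\zeta\rangle+t^2\to\infty$. Expanding $|\varphi(w_t)|^2-|w_t|^2$ with $\varphi(w_t)=\varphi(z_0)+tA\zeta$, the two $t^2$-terms cancel because $|A\zeta|=|\zeta|=1$, and the linear terms cancel because $\langle z_0,\zeta\rangle=\langle A^*\varphi(z_0),\zeta\rangle=\langle\varphi(z_0),A\zeta\rangle$. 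Hence $|\varphi(w_t)|^2-|w_t|^2=|\varphi(z_0)|^2-|z_0|^2$, and a short computation using $z_0=A^*\varphi(z_0)$ again identifies this constant with $|z_0|^2-|Az_0|^2+|B|^2$, so that $\|C^*_\varphi k_{w_t}\|=\|C_\varphi\|$ for every $t$.

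Finally I would confirm $k_{w_t}\to0$ weakly: since $\|k_{w_t}\|=1$ it suffices to test against the dense span of kernels, and $|\langle K_a,k_{w_t}\rangle|=\exp\bigl(\frac12\re\langle w_t,a\rangle-\frac14|w_t|^2\bigr)\to0$ as $|w_t|\to\infty$. The main obstacle is the algebraic heart of the third paragraph: recognizing that $z_0+t\zeta$ is the correct family and checking that both the quadratic and linear powers of $t$ drop out, so that the kernel ratio is \emph{exactly} constant and equal to $\|C_\varphi\|$ rather than merely asymptotic to it. Everything else is routine bookkeeping.
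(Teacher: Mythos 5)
Your proof is correct, and while the overall strategy (bounding $\|C_\varphi\|_e=\|C^*_\varphi\|_e$ from below by testing against weakly null normalized kernels along which the ratio $\|K_{\varphi(w)}\|/\|K_w\|$ is maximal) is the same as the paper's, your construction of the test family is genuinely different and more self-contained. The paper first passes to the normalized symbol $\psi(z)=\Sigma z+B'$ via the singular value decomposition $A=V\Sigma W$ (using Proposition 1 and Corollary 1 of Carswell--MacCluer--Schuster to see that norms and essential norms are preserved), and then quotes their Lemma 3 and equation (11) for the explicit description of the maximizing points: coordinates $w_m=\sigma_m|b'_m|/(1-\sigma_m^2)$ in the directions with $\sigma_m<1$ and \emph{arbitrary} coordinates in the directions with $\sigma_m=1$, the latter being sent to infinity. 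You instead work directly with $\varphi$ and exhibit the ray $w_t=z_0+t\zeta$ with $\zeta$ a unit vector in $\ker(I-A^*A)=\{v:|Av|=|v|\}$ (nonzero since $\|A\|=1$ by non-compactness, and equal to that set because $I-A^*A\ge 0$), verifying by hand via $z_0=A^*\varphi(z_0)$ that both the quadratic and linear terms in $t$ cancel, so that $\|C^*_\varphi k_{w_t}\|$ is \emph{exactly} the constant $\exp\bigl(\tfrac14(|z_0|^2-|Az_0|^2+|B|^2)\bigr)=\|C_\varphi\|$. Your identities $\langle Az_0,B\rangle=|z_0|^2-|Az_0|^2$ and $\langle\varphi(z_0),B\rangle=|z_0|^2-|Az_0|^2+|B|^2$ agree with the paper's closing computation. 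What your route buys is independence from the fine structure of the maximizers in [CMS] and a coordinate-free argument; what it costs is nothing beyond the norm formula $\|C_\varphi\|=\exp\bigl(\tfrac14(|z_0|^2-|Az_0|^2+|B|^2)\bigr)$, which both proofs must cite. This is a clean and complete alternative.
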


\begin{proof} Since $C_\varphi$ is not compact, we get that $\|A\|=1$ from Theorem A. Let $A=V\Sigma W$ be the singular value decomposition of $A$,
where $V, W$ are unitary and $\Sigma=\mbox{diag}(\sigma_1,\sigma_2,\ldots,\sigma_n)$ with $\sigma_1\ge \sigma_2\ge \cdots\ge \sigma_n\ge 0$ being
the singular values of $A$. Set $j=\max\{r: \sigma_r=1\}$. It is clear that $j\ge 1$ from  $\sigma_1=\|A\|=1$.

Let $\psi(z)=\Sigma z+B'$ be the normalization of $\varphi$ (see Proposition 1 in \cite{CMS}), that is, $\varphi=V\circ \psi\circ W$,
where $B'=V^* B$. Denote $C_W$ and $ C_V$ the
unitary operators respectively given by $C_W(f)=f\circ W$ and $C_V(f)=f\circ V$. This means  $C_\varphi=C_WC_\psi C_V$.
Using Corollary 1 of \cite{CMS}, we get that  $\|C_\varphi\|=\|C_\psi\|$ and  $\|C_\varphi\|_e=\|C_\psi\|_e$.
So it suffices to calculate the essential norm of $C_\psi$.

Let $K_w$ be the reproducing kernels  for  $\mathcal{F}^2(\mathbb{C}^n)$.
The proof of Lemma 3 in \cite{CMS} together with (11) of \cite{CMS} gives
$$
\|C_\psi\|=\|C^*_\psi\|
=\sup\limits_{w\in \mathbb{C}^n}\frac{\|C_\psi^*(K_w)\|}{\|K_w\|}
=\sup\limits_{w\in \mathbb{C}^n}\frac{\|K_{\psi(w)}\|}{\|K_w\|},
$$
and $\|K_{\psi(w)}\|/\|K_w\|$ attains its maximum at points $w=(w_1,w_2,\ldots,w_n)\in \mathbb{C}^n$
which satisfy, for $j+1\le m\le n$,
$$
|w_m|=\frac{\sigma_m|b'_m|}{1-\sigma^2_m}\quad
\mbox{and}
\quad
\mbox{arg}\ w_m \  \mbox{chosen\ so\ that} \ w_mb'_m\ge 0,
$$
and arbitrary $w_m$ for $m\le j$, where $b'_m$ is the $m^{th}$ coordinate of $B'$.

On the other hand, the normalized reproducing kernel functions $k_w=K_w/\|K_w\|$ tend to $0$ weakly on $\mathcal{F}^2$ as $|w|\to \infty$, so that
\begin{eqnarray*}
\|C_\psi\|_e
&=&\|C^*_\psi\|_e \ge \limsup\limits_{|w|\to\infty}\frac{\|C_\psi^*(K_w)\|}{\|K_w\|}\\
&\ge & \limsup\limits_{\substack{|w'|\to\infty\\
w'=(w_1,\ldots,w_j)}}\biggl\{\frac{\|C_\psi^*(K_w)\|}{\|K_w\|}: \  |w_m|=\frac{\sigma_m|b'_m|}{1-\sigma^2_m} \ \\
&& \mbox{and} \ \mbox{arg}\ w_m \ \mbox{chosen\ so\ that}\
w_mb'_m\ge 0\  \mbox{for}\  j+1\le m\le n\biggr\}\\
&=& \sup\limits_{w\in \mathbb{C}^n}\frac{\|C_\psi^*(K_w)\|}{\|K_w\|}
=\|C_\psi\|.
\end{eqnarray*}
However,  we know that
$\|C_\psi\|_e\le \|C_\psi\|$. This together with the above discussion gives
$$
\|C_\psi\|_e=\|C_\psi\|= \sup\limits_{w\in \mathbb{C}^n}\frac{\|C_\psi^*(K_w)\|}{\|K_w\|}.
$$
Therefore, $\|C_\varphi\|_e=\|C_\psi\|_e= \|C_\psi\|=\|C_\varphi\|$.

Now, Theorem 4 of \cite{CMS} gives that
$$
\|C_\varphi\|=\exp\biggl(\frac{1}{4}(|z_0|^2-|Az_0|^2+|B|^2)\biggr),
$$
where $z_0$ is any solution to $(I-A^*A)z=A^*B$. In fact, $(I-A^*A)z_0=A^*B$ yields that
$$
|z_0|^2-|Az_0|^2
=\langle z_0, (I-A^*A)z_0 \rangle=\langle z_0, A^*B\rangle
=\langle  Az_0, B \rangle.
$$
It follows that{\setlength\arraycolsep{2pt}
\begin{eqnarray*}
\|C_\varphi\|_e &=& \|C_\varphi\|=\exp\biggl(\frac{1}{4}(|z_0|^2-|A z_0|^2+|B|^2)\biggr)
\\ &=&\exp\biggl(\frac{1}{4}(\langle A z_0, B\rangle+|B|^2)\biggr)
\\ &=&\exp\biggl(\frac{1}{4}\langle \varphi(z_0), B\rangle\biggr).
\end{eqnarray*}}
\end{proof}

Next, we will discuss Schatten class composition operators on the Fock space $\mathcal{F}^2(\mathbb{C}^n)$. Recall that if $T$ is  a compact
operator on a separable Hilbert space $H$, then there exist orthonormal sets $\{e_n\}$ and $\{\sigma_n\}$ in $H$ such that
$$Tx=\sum_n \lambda_n \langle x, e_n \rangle \sigma_n, \qquad x\in H,$$
where $\lambda_n$ is the  singular value of $T$, i.e. it is the eigenvalue of $|T|=(T^*T)^{1/2}$.

Given $0<p<\infty$,
if the sequence $\{\lambda_n\}$ belongs to $l^p$, we say that $T$ belongs to the Schatten $p$-class of $H$, denoted $S_p(H)$ or $S_p$,
and define the norm $$\|T\|_p=\biggl[\sum_n |\lambda_n|^p\biggr]^{1/p}.$$
Usually, $S_1$ is also called the trace class and $S_2$ is called the Hilbert-Schmidt class.

It is well known that if $T$ is compact on $H$, then $T\in S_p$ if and only if $T^*T\in S_p$ with $\|T\|^p_p=\|T^*T\|_{\frac{p}{2}}$. We refer to \cite{Zhu90} for more information about the Schatten classes.

For a Borel measure $\mu$ on $\mathbb{C}^n$, we define the Toeplitz operator $T_\mu$ on $\mathcal{F}^2(\mathbb{C}^n)$ as follows:
$$T_\mu(f)(z)=\biggl(\frac{1}{2\pi}\biggr)^n\int_{\mathbb{C}^n}f(w)K(z, w)e^{-\frac{|w|^2}{2}}d\mu(w).$$
The Berezin transform of $\mu$ is  defined:
{\setlength\arraycolsep{2pt}
\begin{eqnarray*}\widetilde{\mu}(z)&=&\langle T_\mu k_z, k_z\rangle
=\biggl(\frac{1}{2\pi}\biggr)^n\int_{\mathbb{C}^n}|k_z(w)|^2e^{-\frac{|w|^2}{2}}d\mu(w)
\\ &=&\biggl(\frac{1}{2\pi}\biggr)^n\int_{\mathbb{C}^n}e^{-\frac{1}{2}|z-w|^2}d\mu(w),
\end{eqnarray*}}where $k_z(w)=K(w,z)/\sqrt{K(z,z)}=e^{\frac{1}{2}\langle w, z\rangle-\frac{1}{4}|z|^2}$
are the normalized reproducing kernels of  $\mathcal{F}^2(\mathbb{C}^n)$.

\begin{theorem}\label{the 2.2}
Suppose that $C_\varphi$ is compact on $\mathcal{F}^2(\mathbb{C}^n)$,
then  $C_\varphi$ belongs to the Schatten $p$-class $S_p$ for all $0<p<\infty$.
\end{theorem}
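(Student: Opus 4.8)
The plan is to reduce the general compact case to a one–variable pure dilation, whose singular values can be written down explicitly. Since $C_\varphi$ is compact, Theorem A(b) forces $\varphi(z)=Az+B$ with $\|A\|<1$, so every singular value of $A$ is strictly less than $1$. Writing the singular value decomposition $A=V\Sigma W$ and $\psi(z)=\Sigma z+B'$ exactly as in the proof of Theorem \ref{the 2.1}, we have $C_\varphi=C_WC_\psi C_V$ with $C_V,C_W$ unitary. Because singular values, and hence membership in $S_p$ together with the value of $\|\cdot\|_p$, are invariant under multiplication by unitaries, it suffices to prove $C_\psi\in S_p$ for all $0<p<\infty$.

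First I would exploit that $\Sigma$ is diagonal, so $\psi$ acts coordinatewise, $\psi(z)=(\sigma_1z_1+b_1',\dots,\sigma_nz_n+b_n')$. Under the canonical identification $\mathcal{F}^2(\mathbb{C}^n)\cong\mathcal{F}^2(\mathbb{C})^{\otimes n}$ given by the monomial basis, this yields $C_\psi=C_{\psi_1}\otimes\cdots\otimes C_{\psi_n}$, where $\psi_j(\zeta)=\sigma_j\zeta+b_j'$ acts on $\mathcal{F}^2(\mathbb{C})$. The nonzero singular values of a tensor product are the products of the singular values of the factors, so $\|C_\psi\|_p^p=\prod_{j=1}^n\|C_{\psi_j}\|_p^p$; thus the whole theorem reduces to the single statement that a one–variable affine map $\psi_j(\zeta)=\sigma\zeta+b$ with $0\le\sigma<1$ induces a composition operator lying in every $S_p$.

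The heart of the argument is this one–variable case, and the difficulty is precisely the translation term $b$. The pure dilation $C_{\delta_\sigma}$, with $\delta_\sigma(\zeta)=\sigma\zeta$, is diagonal on the monomials with $C_{\delta_\sigma}\zeta^k=\sigma^k\zeta^k$, so its singular values are $\sigma^k$ and it manifestly belongs to every $S_p$ (the series $\sum_k\sigma^{kp}$ converges), but the bare translation $\zeta\mapsto\zeta+b$ is not even bounded on the Fock space. The key trick I would use is to absorb the translation into the Weyl (displacement) operators $W_a f(\zeta)=e^{\frac12\langle\zeta,a\rangle-\frac14|a|^2}f(\zeta-a)$, which are unitary on $\mathcal{F}^2(\mathbb{C})$. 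A direct computation shows that for the choice $a_1=\sigma b/(1-\sigma^2)$ and $a_2=-b/(1-\sigma^2)$ the $\zeta$–dependence of the two kernel factors cancels, giving a factorization $C_{\psi_j}=c\,W_{a_1}C_{\delta_\sigma}W_{a_2}$ for an explicit nonzero constant $c$. Since $W_{a_1},W_{a_2}$ are unitary, the nonzero singular values of $C_{\psi_j}$ are exactly $|c|\sigma^k$, whence $\|C_{\psi_j}\|_p^p=|c|^p/(1-\sigma^p)<\infty$ for every $p>0$.

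Assembling these steps gives $C_\varphi\in S_p$ for all $0<p<\infty$. The only remaining point is the degenerate case $\sigma_j=0$, where $\psi_j$ is constant and $C_{\psi_j}$ is rank one (hence trivially in every $S_p$, and in fact subsumed by the same formula); this is harmless in both the tensor–product bookkeeping and the Weyl factorization. I expect verifying the cancellation in the Weyl factorization to be the main obstacle, since it is exactly where the unboundedness of the bare translation is tamed; everything else is standard singular–value bookkeeping. An alternative route, more in the spirit of the Toeplitz machinery set up above, would be to note that $C_\varphi^*C_\varphi$ is the Toeplitz operator $T_\mu$ with $\mu$ the Gaussian pushforward of $dv_\alpha$ under $\varphi$, compute its Berezin transform $\widetilde{\mu}$ (a Gaussian, so rapidly decaying along any lattice), and invoke a Schatten–class criterion to place $T_\mu$ in every $S_q$, so that $C_\varphi\in S_p$ for $p=2q$ via $\|C_\varphi\|_p^p=\|C_\varphi^*C_\varphi\|_{p/2}^{p/2}$; this works as well but requires more care when $A$ is not invertible.
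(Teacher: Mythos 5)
Your argument is correct, but it is genuinely different from the paper's. The paper identifies $C_\varphi^*C_\varphi$ with a Toeplitz operator $T_\mu$, invokes the Schatten-class criterion for Fock--Toeplitz operators of Isralowitz, Virtanen and Wolf (membership of the Berezin transform $\widetilde{\mu}$ in $L^{p/2}$), and then bounds $\int_{\mathbb{C}^n}\widetilde{\mu}^{p/2}\,dv$ by two applications of the Gaussian integral formula (2.1); this is essentially the ``alternative route'' you sketch in your final paragraph. Your main route instead diagonalizes the operator outright: after the same unitary reduction $C_\varphi=C_WC_\psi C_V$ coming from the singular value decomposition of $A$, you tensor-factor $C_\psi$ over the coordinates and conjugate each one-variable factor $\zeta\mapsto\sigma\zeta+b$ by Weyl operators to a pure dilation. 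The cancellation you flag as the main obstacle does go through: one needs $\overline{a_1}+\sigma\overline{a_2}=0$ (to kill the $\zeta$-dependence of the exponential prefactors) and $-\sigma a_1-a_2=b$ (to produce the right affine map), and your choices $a_1=\sigma b/(1-\sigma^2)$, $a_2=-b/(1-\sigma^2)$ satisfy both, with a nonzero constant $c$ left over. Hence the singular values of $C_\varphi$ are exactly $|c|\,\sigma_1^{k_1}\cdots\sigma_n^{k_n}$ over multi-indices $(k_1,\dots,k_n)$, and membership in every $S_p$ is immediate since each $\sigma_j\le\|A\|<1$. What your approach buys is an exact, self-contained computation of the singular values and of $\|C_\varphi\|_p$ with no appeal to external Toeplitz machinery; what the paper's approach buys is the by-product it emphasizes afterwards, namely the equivalence of the two integral conditions in (2.2), which falls out of the Berezin-transform calculation but not of yours.
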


\begin{proof}
First, for a function $f$ in $\mathcal{F}^2(\mathbb{C}^n)$, it is easy to check that
{\setlength\arraycolsep{2pt}
\begin{eqnarray*}C_\varphi^*C_\varphi f(z)&=&\langle C_\varphi^*C_\varphi f, K_z\rangle=\langle C_\varphi f, C_\varphi K_z\rangle
\\ &=&\biggl(\frac{1}{2\pi}\biggr)^n\int_{\mathbb{C}^n}f(\varphi(w)) K_z(\varphi(w))e^{-\frac{|w|^2}{2}}dv(w)
\\ &=&\biggl(\frac{1}{2\pi}\biggr)^n\int_{\mathbb{C}^n}f(\varphi(w)) K_z(\varphi(w))e^{-\frac{|\varphi(w)|^2}{2}}e^{\frac{1}{2}(|\varphi(w)|^2-|w|^2)}dv(w)
\\ &=&\biggl(\frac{1}{2\pi}\biggr)^n\int_{\mathbb{C}^n}f(u) K_z(u)e^{-\frac{|u|^2}{2}}d\mu(u)=T_\mu(f)(z),
\end{eqnarray*}}where $d\mu=d\nu\circ \varphi^{-1}$ with $d\nu(z)=e^{\frac{1}{2}(|\varphi(z)|^2-|z|^2)}dv(z)$.

Thus, for $0<p<\infty$, the composition operator $C_\varphi$ belongs to the Schatten $p$-class $S_p$
if and only if $T_\mu=C_\varphi^*C_\varphi$ belongs to $S_{p/2}$. By Theorem 2.7 and Theorem 3.2 in \cite{IVW15}, it is equivalent to that
$\widetilde{\mu}(z)$ is in $L^{p/2}(\mathbb{C}^n, dv)$.

Now, we calculate that
{\setlength\arraycolsep{2pt}
\begin{eqnarray*}
&& \int_{\mathbb{C}^n}\widetilde{\mu}(z)^{p/2}dv(z)
=\int_{\mathbb{C}^n}\biggl(\biggl(\frac{1}{2\pi}\biggr)^n\int_{\mathbb{C}^n}e^{-\frac{1}{2}|z-w|^2}d\mu(w)\biggr)^{p/2}dv(z)
\\ &=&\int_{\mathbb{C}^n} \biggl(\biggl(\frac{1}{2\pi}\biggr)^n\int_{\mathbb{C}^n}e^{-\frac{1}{2}|z-\varphi(u)|^2}
e^{\frac{1}{2}(|\varphi(u)|^2-|u|^2)}dv(u)\biggr)^{p/2}dv(z)
\\ &=&\int_{\mathbb{C}^n}e^{-\frac{p}{4}|z|^2} \biggl(\biggl(\frac{1}{2\pi}\biggr)^n\int_{\mathbb{C}^n}e^{Re \langle \varphi(u), z\rangle}
e^{-\frac{1}{2}|u|^2}dv(u)\biggr)^{p/2}dv(z).
\end{eqnarray*}}

If $C_\varphi$ is compact, then $\varphi(z)=Az+B$ with $\|A\|<1$ from Theorem A. On the other hand, Lemma 3 in \cite{DZ08} gives that
$$\int_{\mathbb{C}^n}|e^{s\langle z, a\rangle}|dv_\alpha(z)=e^{s^2|a|^2/4\alpha}\eqno(2.1)$$ for all $a\in\mathbb{C}^n$, where $s$ is real.
Therefore, using the formula (2.1) twice,
{\setlength\arraycolsep{2pt}
\begin{eqnarray*}
&& \int_{\mathbb{C}^n}\widetilde{\mu}(z)^{p/2}dv(z)
\\ &=& \int_{\mathbb{C}^n}e^{-\frac{p}{4}|z|^2} \biggl(\biggl(\frac{1}{2\pi}\biggr)^n\int_{\mathbb{C}^n}e^{Re \langle Au+B, z\rangle}
e^{-\frac{1}{2}|u|^2}dv(u)\biggr)^{p/2}dv(z)
\\ &=& \int_{\mathbb{C}^n}|e^{\frac{p}{2}\langle z, B\rangle}|e^{-\frac{p}{4}|z|^2}
 \biggl(\biggl(\frac{1}{2\pi}\biggr)^n\int_{\mathbb{C}^n}|e^{ \langle u, A^*z\rangle}|
e^{-\frac{1}{2}|u|^2}dv(u)\biggr)^{p/2}dv(z)
\\ &=& \int_{\mathbb{C}^n}|e^{\frac{p}{2}\langle z, B\rangle}|e^{-\frac{p}{4}|z|^2}(e^{|A^*z|^2/2})^{p/2}dv(z)
\\ &\le & \int_{\mathbb{C}^n}|e^{\frac{p}{2}\langle z, B\rangle}|e^{-\frac{p}{4}|z|^2}e^{\frac{p}{4}\|A^*\|^2|z|^2}dv(z)
\\ &=& \int_{\mathbb{C}^n}|e^{\frac{p}{2}\langle z, B\rangle}|e^{-\frac{p}{4}(1-\|A\|^2)|z|^2}dv(z)
\\ & \le & C e^{\frac{p|B|^2}{1-\|A\|}}<\infty,
\end{eqnarray*}}where $C>0$ is a constant depending on $\|A\|$. Thus, $\widetilde{\mu}(z)\in L^{p/2}(\mathbb{C}^n, dv)$
and  $C_\varphi$ is in the Schatten $p$-class $S_p$ for any $0<p<\infty$.

\end{proof}

In fact,  this result has been obtained by Du \cite{Du11} and Schatten class weighted composition operators on the Fock spaces have been studied by many
authors (see \cite{WA15}, \cite{TM13}, \cite{TM15}, \cite{Ue07}). However, our different proof leads to the following interest fact:
If $C_\varphi$ is compact on  $\mathcal{F}^2(\mathbb{C}^n)$,  the conditions
$$\int_{\mathbb{C}^n}\|C_\varphi k_z\|^pdv(z)<\infty\quad \mbox{and} \quad \int_{\mathbb{C}^n}\|C^*_\varphi k_z\|^pdv(z)<\infty\eqno(2.2)$$
are equivalent. While, in the setting of the Bergman space, Xia \cite{JXia03} found a self-mapping $\varphi$ of the unit disc $D$, which induces a
composition operator satisfying
$$\int_D\|C^*_\varphi k_z\|^p\frac{dv(z)}{1-|z|^2}<\infty\quad \mbox{but} \quad \int_D\|C_\varphi k_z\|^p\frac{dv(z)}{1-|z|^2}=\infty.$$

Theorem 2.2 and the above discussion reflect that there are lots of  different characteristics for composition
operators on the  Fock spaces. As we know, many examples show that compact composition operators on the Bergman space  maybe are
not in any of the Schatten classes (see \cite{CC91}, \cite{BLo98}, \cite{YZhu01}).

Next, we will prove how the conditions in (2.2) are equivalent when $C_\varphi$ is compact on $\mathcal{F}^2(\mathbb{C}^n)$. The proof of Theorem 2.2 gives that
{\setlength\arraycolsep{2pt}
\begin{eqnarray*}
\int_{\mathbb{C}^n}\|C_\varphi k_z\|^pdv(z)&=& \int_{\mathbb{C}^n}\langle C^*_\varphi C_\varphi k_z, k_z\rangle^{\frac{p}{2}}dv(z)
\\ &=& \int_{\mathbb{C}^n}\langle T_\mu k_z, k_z\rangle^{\frac{p}{2}}dv(z)=\int_{\mathbb{C}^n}\widetilde{\mu}(z)^{\frac{p}{2}}dv(z)
\\ & \le & C e^{\frac{p|B|^2}{1-\|A\|}}<\infty.
\end{eqnarray*}}On the other hand, using the fact $C_\varphi^* K_z=K_{\varphi(z)}$ and the equation (2.1), we obtain
{\setlength\arraycolsep{2pt}
\begin{eqnarray*}
\int_{\mathbb{C}^n}\|C^*_\varphi k_z\|^pdv(z)&=& \int_{\mathbb{C}^n}\biggl(\frac{\|K_{\varphi(z)}\|}{\|K_z\|}\biggr)^pdv(z)
\\ &=& \int_{\mathbb{C}^n} e^{\frac{p}{4}(|\varphi(z)|^2-|z|^2)}dv(z)
=\int_{\mathbb{C}^n} e^{\frac{p}{4}(|Az+B|^2-|z|^2)}dv(z)
\\ &\le & e^{\frac{p}{4}|B|^2} \int_{\mathbb{C}^n}|e^{\frac{p}{2}\langle z, A^*B\rangle}|e^{-\frac{p}{4}(1-\|A\|^2)|z|^2}dv(z)
\\ & \le & C e^{\frac{p}{4}|B|^2} e^{\frac{p|A^*B|^2}{1-\|A\|}}<\infty.
\end{eqnarray*}}

\section { Normal composition operators}

In this section, we will characterize normal  composition operators  on the Fock space $\mathcal{F}^2(\mathbb{C}^n)$.
This property for weighted composition operators has been investigated  in some articles \cite{SP12}, \cite{SJ11}.
But we  will try to reveal a perfect nature of normal composition operators on the Fock spaces.

First, we give a complete characterization
for normal composition operators. Using this,  we find an interesting result,  that is, hypernormal  composition operators must be normal.
 Furthermore, we prove that
if $C_\varphi$ is essentially normal then  $C_\varphi$ must be compact or normal. Here,  $C_\varphi$ is called essentially normal if the commutator
$[C^*_\varphi, C_\varphi]=C^*_\varphi C_\varphi-C_\varphi C^*_\varphi$
is compact.

\begin{theorem}\label{the 3.1}
Let $\varphi$ be a holomorphic mapping of $\mathbb{C}^n$.
Assume that $C_\varphi$ is bounded  on $\mathcal{F}^2(\mathbb{C}^n)$.
Then $C_\varphi$ is normal if and only if  $\varphi(z)=Az$ with $A^*A=AA^*$.
\end{theorem}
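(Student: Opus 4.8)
The plan is to work entirely with the reproducing kernels $K_w$, whose linear span is dense in $\mathcal{F}^2(\mathbb{C}^n)$, and to exploit the fact that both $C_\varphi$ and $C_\varphi^*$ send kernels to scalar multiples of kernels. Since $C_\varphi$ is bounded, Theorem A gives $\varphi(z)=Az+B$. The two identities I would establish at the outset are the adjoint formula $C_\varphi^* K_w=K_{\varphi(w)}=K_{Aw+B}$ (immediate from the reproducing property, since $\langle f, C_\varphi^* K_w\rangle=\langle C_\varphi f, K_w\rangle=f(\varphi(w))$) and the forward formula $C_\varphi K_w=e^{\frac12\langle B,w\rangle}K_{A^*w}$, obtained by expanding $K_w(\varphi(z))=e^{\frac12\langle Az+B,w\rangle}=e^{\frac12\langle B,w\rangle}e^{\frac12\langle z,A^*w\rangle}$. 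With these in hand, both $C_\varphi^* C_\varphi K_w$ and $C_\varphi C_\varphi^* K_w$ become explicit scalar multiples of kernels.

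For necessity, I would assume $C_\varphi$ normal, so that $C_\varphi^* C_\varphi K_w=C_\varphi C_\varphi^* K_w$ for every $w$. Composing the two identities gives
\[
C_\varphi^* C_\varphi K_w=e^{\frac12\langle B,w\rangle}K_{AA^*w+B},\qquad C_\varphi C_\varphi^* K_w=e^{\frac12\langle B,Aw\rangle+\frac12|B|^2}K_{A^*Aw+A^*B}.
\]
The pivotal step is a uniqueness observation: if $c_1 K_{u_1}=c_2 K_{u_2}$ in $\mathcal{F}^2(\mathbb{C}^n)$, then evaluating at $z=0$ (where every kernel equals $1$) forces $c_1=c_2$, after which $K_{u_1}=K_{u_2}$ forces $u_1=u_2$. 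Applying this for each $w$: the scalar equality at $w=0$ reads $1=e^{\frac12|B|^2}$, which forces $B=0$; the index equality $AA^*w+B=A^*Aw+A^*B$ then collapses (since $B=0$) to $AA^*w=A^*Aw$ for all $w$, i.e. $A^*A=AA^*$. Hence $\varphi(z)=Az$ with $A$ normal.

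For sufficiency, suppose $\varphi(z)=Az$ with $A^*A=AA^*$. Now $B=0$, so the identities reduce to $C_\varphi K_w=K_{A^*w}$ and $C_\varphi^* K_w=K_{Aw}$, giving $C_\varphi^* C_\varphi K_w=K_{AA^*w}$ and $C_\varphi C_\varphi^* K_w=K_{A^*Aw}$. Normality of $A$ makes these equal for every $w$, so $C_\varphi^* C_\varphi$ and $C_\varphi C_\varphi^*$ agree on the dense span of the kernels; both being bounded, they coincide, and $C_\varphi$ is normal.

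I do not expect a serious obstacle, as the whole argument is a direct kernel computation; the only point needing care is the uniqueness of the representation $c\,K_u$, which cleanly separates the scalar prefactor from the kernel index. It is worth emphasizing that the scalar comparison at $w=0$ already eliminates $B$ before the matrix part even enters, so I would extract $B=0$ first and only then read off the normality of $A$ from the linear term.
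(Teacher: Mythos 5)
Your proof is correct and follows essentially the same route as the paper's: both test the normality relation against reproducing kernels, first extracting $B=0$ by comparing $C_\varphi^*C_\varphi$ and $C_\varphi C_\varphi^*$ at $K_0=1$, and then reading off $A^*A=AA^*$ from the linear action on the kernel indices. The only cosmetic difference is that the paper packages your forward and adjoint kernel formulas as the identity $C_\varphi^*=C_\tau$ with $\tau(z)=A^*z$ (Lemma 2 of \cite{CMS}) and gets $B=0$ from hyponormality alone, whereas your version is a self-contained kernel computation with an explicit uniqueness step for expressions of the form $c\,K_u$.
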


\begin{proof}
This result can be shown using the same idea as Theorem 8.2 of \cite{CM1}. Here, we give a simple argument.

First, since $C_\varphi$ is bounded on $\mathcal{F}^2(\mathbb{C}^n)$, by Theorem A, we have $\varphi(z)=Az+B$. Now, assume that $C_\varphi$ is normal (then it is hyponormal)  on $\mathcal{F}^2$. Then $$1=\|C_\varphi 1\|^2\ge\|C_\varphi^* 1\|^2=\|C_\varphi^* K_0\|^2=\|K_{\varphi(0)}\|^2=e^{|\varphi(0)|^2/2},$$
which  implies $\varphi(0)=0$. So we get  $B=0$ and $\varphi(z)=Az$.

Lemma 2 in \cite{CMS} shows that the adjoint $C_\varphi^*=C_\tau$ with $\tau(z)=A^*z$. Since  $C_\varphi$ is normal, we have
$$C_{\varphi\circ\tau}=C_\tau C_\varphi=C_\varphi^*C_\varphi=C_\varphi C_\varphi^*=C_\varphi C_\tau=C_{\tau\circ \varphi}.$$
It follows that $AA^*z=\varphi\circ\tau(z)=\tau\circ \varphi(z)=A^*Az$ for any $z\in \mathbb{C}^n$. Therefore, we deduce $AA^*=A^*A$. The other direction is obvious, so we complete the proof.
\end{proof}

{\bf Remark.} From this result, we may  find  that Proposition 2.4 in \cite{SJ11} is not true, because the authors deduced that if  $C_\varphi$ is normal on  $\mathcal{F}^2(\mathbb{C}^n)$, then $\varphi$ is univalent.
\\ \par
Next, we also present  the following  interesting result.

\begin{proposition}\label{prop 3.2} Assume that  $\varphi:\mathbb{C}^n\rightarrow \mathbb{C}^n$ is  a holomorphic mapping
and  $C_\varphi$ is bounded on $\mathcal{F}^2(\mathbb{C}^n)$. If $C_\varphi$ is  hyponormal, then it is  normal.
\end{proposition}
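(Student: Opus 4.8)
The plan is to combine Theorem~A with Theorem~\ref{the 3.1}, so that proving normality reduces to two algebraic facts about the symbol $\varphi(z)=Az+B$: that $B=0$ and that $A$ is a normal matrix. Since $C_\varphi$ is bounded, Theorem~A already gives $\varphi(z)=Az+B$. The one structural input I would use throughout is the standard reformulation of hyponormality: $C_\varphi$ is hyponormal if and only if $\|C_\varphi^*f\|\le\|C_\varphi f\|$ for every $f\in\mathcal{F}^2(\mathbb{C}^n)$. I would test this inequality on two carefully chosen families of functions, first the constant $1$ and then the reproducing kernels $K_w$.

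First I would eliminate $B$ exactly as in the proof of Theorem~\ref{the 3.1}. Applying the hyponormality inequality to $f=1=K_0$ gives $1=\|C_\varphi 1\|^2\ge\|C_\varphi^* K_0\|^2=\|K_{\varphi(0)}\|^2=e^{|\varphi(0)|^2/2}$, which forces $\varphi(0)=0$, hence $B=0$ and $\varphi(z)=Az$.

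Next I would exploit the kernels. With $B=0$ one computes directly that $C_\varphi K_w=K_{A^*w}$ (from $K_w(Az)=e^{\alpha\langle z,A^*w\rangle}$) while the general identity $C_\varphi^*K_w=K_{\varphi(w)}=K_{Aw}$, already used in Section~2, handles the adjoint. Using $\|K_v\|^2=e^{\alpha|v|^2}$, the inequality $\|C_\varphi^*K_w\|\le\|C_\varphi K_w\|$ becomes $|Aw|\le|A^*w|$ for every $w\in\mathbb{C}^n$, that is, $\langle A^*Aw,w\rangle\le\langle AA^*w,w\rangle$ for all $w$, so that $A^*A\le AA^*$ as Hermitian matrices.

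The crux of the argument, and the step I expect to carry the whole weight, is upgrading this one-sided inequality to equality. Here I would invoke the trace: since $\mathrm{tr}(A^*A)=\sum_{i,j}|a_{ij}|^2=\mathrm{tr}(AA^*)$, the matrix $M=AA^*-A^*A$ is positive semidefinite with $\mathrm{tr}(M)=0$, hence all its eigenvalues vanish and $M=0$. Thus $A^*A=AA^*$, and with $B=0$ Theorem~\ref{the 3.1} shows $C_\varphi$ is normal (indeed the converse direction of that theorem is the trivial one, so one could also conclude directly). The only subtlety to watch is that the trace equality $\mathrm{tr}(A^*A)=\mathrm{tr}(AA^*)$ holds for \emph{every} square matrix, so no extra hypothesis on $A$ is needed; everything else is a short calculation with kernel norms.
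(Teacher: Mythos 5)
Your proposal is correct, and its second half takes a genuinely different route from the paper. Both arguments begin identically: testing the hyponormality inequality $\|C_\varphi^*f\|\le\|C_\varphi f\|$ on $f=1=K_0$ forces $\varphi(0)=0$, hence $B=0$ and $\varphi(z)=Az$. From there the paper tests the inequality on the \emph{linear} functions $f(z)=\langle z,v\rangle$ with $v$ an eigenvector of $A^*$, deduces that $v$ is simultaneously an eigenvector of $A$, and then runs an induction on orthogonal complements to build an orthonormal basis of common eigenvectors, which yields $A^*A=AA^*$. You instead test the inequality on the reproducing kernels: since $C_\varphi K_w=K_{A^*w}$ and $C_\varphi^*K_w=K_{Aw}$, hyponormality of $C_\varphi$ translates into $|Aw|\le|A^*w|$ for all $w$, i.e.\ the matrix inequality $A^*A\le AA^*$, and the identity $\mathrm{tr}(A^*A)=\mathrm{tr}(AA^*)$ forces the positive semidefinite difference $AA^*-A^*A$ to vanish. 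Your version is shorter, avoids the inductive construction, and makes the underlying mechanism transparent: the proposition is really the finite-dimensional fact that a (co-)hyponormal matrix is normal, transported to the symbol $A$ through the kernel functions. What the paper's eigenvector argument buys in exchange is slightly more explicit structural information (an orthonormal eigenbasis diagonalizing $A$), but for the statement as posed your trace argument is a clean and complete substitute. The only stylistic caution is to note explicitly, as you do, that the inequality you obtain is $A^*A\le AA^*$ (the reverse of hyponormality for $A$ itself); the trace argument is indifferent to the direction, so nothing is lost.
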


\begin{proof}
Suppose that
 $C_\varphi$  is hyponormal on $\mathcal{F}^2(\mathbb{C}^n)$. The preceding argument in the proof of Theorem 3.1 shows that $\varphi(z) = Az$ for some
$n\times n$ matrix $A$ with $\|A\|\le 1$.

Let $v_1$ be an eigenvector of $A^*$ with corresponding eigenvalue $\overline{\lambda_1}$. Define $f\in \mathcal{F}^2(\mathbb{C}^n)$ by
$f(z) = \langle z, v_1\rangle$. Because $C_\varphi-\lambda_1I$ is also hyponormal, we have
$$\|(C_\varphi-\lambda_1I)f\|\ge \|(C_\varphi-\lambda_1I)^*f\|. \eqno(3.1)$$
However, for each $z$,
$$((C_\varphi-\lambda_1I)f)(z)=f(\varphi(z))-\lambda_1f(z)=\langle(A-\lambda_1I)z, v_1\rangle=\langle z, 0\rangle=0;$$
that is, $(C_\varphi-\lambda_1I)f$ is the zero function. Hence, by (3.1), $(C_\varphi-\lambda_1I)^*f$ is also the zero
function. This means that for each $z$,
$$0=((C_\varphi-\lambda_1I)^*f)(z)=\langle(A^*-\overline{\lambda_1}I)z, v_1\rangle=\langle z, (A-\lambda_1)v_1\rangle,$$
so that $(A-\lambda_1)v_1=0$, where we have used $C_\varphi^*=C_\tau$ with $\tau(z)=A^*z$.
 Thus $v_1$ is also an eigenvector for $A$ (with eigenvalue $\lambda_1$).

We see that the   subspace $W_1$ of $\mathbb{C}^n$ spanned by $v_1$ is reducing for $A^*$.
This gives  $A^*W_1^\perp\subseteq W_1^\perp$. Now,
 we apply the argument of the preceding paragraph
a second time, starting with an eigenvector $v_2\in W_1^\perp$
for $A^*$ with corresponding
eigenvalue $\overline{\lambda_2}$. Then, we obtain $A v_2=\lambda_2 v_2$. If $n = 2$, we are done,  $A^*$ and $A$ commute
on the basis $\{v_1, v_2\}$ of $\mathbb{C}^n$. Otherwise,  notice that the subspace  $W_2$ of  $\mathbb{C}^n$ spanned
by $v_1$ and $v_2$ is reducing for $A^*$  and  $A^*W_2^\perp\subseteq W_2^\perp$. Thus, we can again apply the
argument of the preceding paragraph to obtain a vector $v_3\in W_2^\perp$, which is both an
eigenvector for $A^*$ and for $A$. We can continue this process until we get a basis of $\mathbb{C}^n$ on which $A^*$ and $A$ commute. Hence, by Theorem 3.1,
we know that $C_\varphi$ is  normal on $\mathcal{F}^2(\mathbb{C}^n)$.
\end{proof}

Finally,  on the Fock space $\mathcal{F}^2(\mathbb{C}^n)$,
we observe that only compact and normal composition operators can be essentially normal.

\begin{theorem}\label{the 3.3} Suppose that $\varphi$ is a holomorphic mapping of $\mathbb{C}^n$ and
 $C_\varphi$ is bounded on $\mathcal{F}^2(\mathbb{C}^n)$.
Then $C_\varphi$ is essentially normal
if and only if  $C_\varphi$ is  either compact or normal.
\end{theorem}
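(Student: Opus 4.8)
The reverse implication is immediate: if $C_\varphi$ is compact then so is $C_\varphi^*$, hence $[C_\varphi^*,C_\varphi]$ is a difference of compact operators and compact; and if $C_\varphi$ is normal the commutator vanishes. So the plan is to prove the forward direction: assuming $C_\varphi$ is essentially normal but not compact, I will show it is normal. By Theorem A we have $\varphi(z)=Az+B$, and non-compactness forces $\|A\|=1$.

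The starting point is that the normalized kernels $k_w$ tend to $0$ weakly as $|w|\to\infty$, so compactness of $[C_\varphi^*,C_\varphi]$ yields $\langle[C_\varphi^*,C_\varphi]k_w,k_w\rangle=\|C_\varphi k_w\|^2-\|C_\varphi^*k_w\|^2\to0$. Exactly as in the proof of Theorem 2.2 and the computation preceding $(2.2)$, formula $(2.1)$ makes both terms explicit:
\[
\|C_\varphi^*k_w\|^2=\exp\!\Big(\tfrac12\big(|Aw+B|^2-|w|^2\big)\Big),\qquad \|C_\varphi k_w\|^2=\exp\!\Big(\tfrac12\big(|A^*w|^2-|w|^2\big)+\re\langle B,w\rangle\Big).
\]
I would then probe this limit along rays $w=tu$, $t\to\infty$. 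Let $E_1$ and $F_1$ be the eigenspaces for the eigenvalue $1$ of $AA^*$ and $A^*A$; both have dimension $j=\max\{r:\sigma_r=1\}\ge1$ in the notation of Theorem 2.1. On $E_1$ the quadratic part of $\log\|C_\varphi k_w\|^2$ vanishes, so $\|C_\varphi k_{tu}\|^2=e^{t\,\re\langle B,u\rangle}$; rotating $u\mapsto\zeta u$ ($|\zeta|=1$, still a unit vector of $E_1$) and comparing with $\|C_\varphi^*k_{tu}\|^2$ forces $\langle B,u\rangle=0$ for every $u\in E_1$, i.e. $B\perp E_1$. Feeding this back, $\|C_\varphi k_{tu}\|^2\equiv1$ on $E_1$, so the limit forces $\|C_\varphi^*k_{tu}\|^2\to1$; since that quantity is $\exp(\tfrac{t^2}2(|Au|^2-1)+t\,\re\langle Au,B\rangle+\tfrac12|B|^2)$, its convergence to $1$ forces first $|Au|=1$ (hence $E_1\subseteq F_1$, and $E_1=F_1=:G$ by equal dimension) and then the constant term $\tfrac12|B|^2=0$, that is $B=0$.

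With $B=0$ and $E_1=F_1=G$ in hand, a direct check gives $A(G)\subseteq G$ and $A(G^\perp)\subseteq G^\perp$, so $G$ reduces $A$; writing $A=U\oplus A'$ on $G\oplus G^\perp$, the block $U:=A|_G$ is unitary (since $A^*A=AA^*=I$ on $G$) while $A':=A|_{G^\perp}$ has $\|A'\|<1$. The decomposition $\mathbb{C}^n=G\oplus G^\perp$ splits the Fock space as a tensor product $\mathcal{F}^2(\mathbb{C}^n)=\mathcal{F}^2(G)\otimes\mathcal{F}^2(G^\perp)$, and because $\varphi$ is now the product map $z\mapsto(Uz_1,A'z_2)$ we get $C_\varphi=C_U\otimes C_{A'}$ with $C_U$ unitary. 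Consequently
\[
[C_\varphi^*,C_\varphi]=I_{\mathcal{F}^2(G)}\otimes[C_{A'}^*,C_{A'}].
\]
Since $G\ne0$, the identity on the infinite-dimensional space $\mathcal{F}^2(G)$ is not compact, and testing on $e_k\otimes\xi$ for an orthonormal sequence $(e_k)$ shows that $I\otimes T$ is compact only when $T=0$. Hence $[C_{A'}^*,C_{A'}]=0$, so $C_{A'}$ is normal; by Theorem 3.1 in dimension $n-j$ this gives $A'$ normal, whence $A=U\oplus A'$ is normal and, by Theorem 3.1 again, $C_\varphi$ is normal.

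The step I expect to be the real obstacle is the passage from the top singular value to all of them. The Berezin-type test $\langle[C_\varphi^*,C_\varphi]k_w,k_w\rangle\to0$ only \emph{sees} the eigenvalue $1$: along any direction whose singular value is $<1$ both $\|C_\varphi k_w\|$ and $\|C_\varphi^*k_w\|$ decay to $0$, so the kernels alone carry no information about the contractive part $A'$. The content of the argument is therefore the structural reduction---showing that $G$ reduces $A$ and exploiting the tensor identity, which converts ``$I\otimes[C_{A'}^*,C_{A'}]$ compact'' into ``$[C_{A'}^*,C_{A'}]=0$'' and lets Theorem 3.1 finish. The other point requiring care is the phase-rotation bookkeeping that upgrades the ray estimates to the exact conclusions $B\perp E_1$ and $E_1=F_1$.
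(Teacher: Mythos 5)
Your proof is correct, and its second half takes a genuinely different route from the paper's. The first half is essentially the paper's argument: both exploit $\langle[C^*_\varphi,C_\varphi]k_w,k_w\rangle=\|C_\varphi k_w\|^2-\|C^*_\varphi k_w\|^2\to 0$ along rays where the relevant quadratic terms cancel and read off $B=0$ from the surviving constant $e^{|B|^2/2}$; the paper runs a single ray $t\zeta$ with $|A\zeta|=|\zeta|$ (i.e.\ in your $F_1$), while you work in $E_1$ with phase rotations. One bookkeeping remark: do the case split on $|Au|$ first --- ruling out $|Au|<1$ by comparing the rays through $u$ and $-u$ --- so that Theorem A kills the linear term ${\rm Re}\,\langle Au,B\rangle$ before you extract $\langle B,u\rangle=0$; as written, the claim $B\perp E_1$ slightly precedes the fact that justifies it, but the repair is mechanical. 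Where you genuinely diverge is the passage from $B=0$ to $AA^*=A^*A$. The paper writes $[C^*_\varphi,C_\varphi]=C_{\varphi\circ\tau}-C_{\tau\circ\varphi}$ and invokes the theorem of Choe et al.\ that a difference of composition operators with distinct symbols is compact only when both are compact; since neither is (both $\|A^*A\|$ and $\|AA^*\|$ equal $1$), the symbols must coincide. You instead prove the structural statement that $G:=E_1=F_1$ reduces $A$, split $A=U\oplus A'$ with $U$ unitary and $\|A'\|<1$, factor $\mathcal{F}^2(\mathbb{C}^n)=\mathcal{F}^2(G)\otimes\mathcal{F}^2(G^\perp)$ and $C_\varphi=C_U\otimes C_{A'}$, and use the elementary fact that $I\otimes T$ is compact only if $T=0$ to force $C_{A'}$ to be normal, hence $A'$ and then $A$ normal by Theorem 3.1. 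Your route is self-contained (it avoids the nontrivial compact-difference theorem) at the cost of the extra reduction $E_1=F_1$, which your ray analysis supplies anyway; the paper's route is shorter given the cited result. Both are valid, and your tensor factorization has the added virtue of explaining structurally why no intermediate behavior is possible: the unitary factor reproduces the commutator of the contractive factor along an infinite orthonormal family.
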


\begin{proof}
Compact and normal operators are trivially essentially normal.
We only need to prove the other direction.

Assume that $C_\varphi$ is essentially normal,
i.e. the commutator $[C^*_\varphi, C_\varphi]$ is compact.
Let $k_p=K_p/\|K_p\|$ be the normalized reproducing kernel at $p\in \mathbb{C}^n$.
Note that the sequence $\{k_p\}$ tends to zero weakly on $\mathcal{F}^2(\mathbb{C}^n)$ as $|p|\to \infty$, so we have
$$
\limsup\limits_{|p|\to \infty}\|[C^*_\varphi, C_\varphi]k_p\|=0.
$$

Since  $C_\varphi$ is bounded on $\mathcal{F}^2(\mathbb{C}^n)$, we get $\varphi(z)=Az+B$,
where $A$ and $B$ are described as Theorem A. Moreover, Lemma 2 of \cite{CMS} gives $C^*_\varphi=M_{K_B}C_\tau$  with $\tau(z)=A^*z$.
Thus, for each $p\in  \mathbb{C}^n$,
$$
C_\varphi K_p= (M_{K_B}C_\tau)^*K_p=\overline{K_B(p)}K_{\tau(p)}.
$$
It follows that{\setlength\arraycolsep{2pt}
\begin{eqnarray*}
\|[C^*_\varphi, C_\varphi]k_p\|
&\ge & |\langle[C^*_\varphi, C_\varphi]k_p, k_p\rangle|
=\frac{| \| C_\varphi K_p\|^2-\|C^*_\varphi K_p\|^2|}{\|K_p\|^2}\\
&=&\frac{| \|\overline{K_B(p)}K_{\tau(p)}\|^2-\|K_{\varphi(p)}\|^2|}{\|K_p\|^2}\\
&=& \frac{\|K_{\varphi(p)}\|^2}{\|K_p\|^2}\biggl|1-\frac{|\overline{K_B(p)}|^2\|K_{\tau(p)}\|^2}{\|K_{\varphi(p)}\|^2}\biggr|.
\end{eqnarray*}}

If $\|A\|<1$, then $C_\varphi$ is compact and the result obviously holds. Thus,
it  suffices to prove that $C_\varphi$ is normal in the case of $\|A\|=1$. This means,  we  should prove that  $B=0$ and $A^*A=AA^*$ according to Theorem 3.1.
Now, assume $\|A\|=1$,  there exists a $\zeta\in\mathbb{C}^n$
such that $|A\zeta|=|\zeta|$.
Choosing   $p=t\zeta$ with $t\to \infty$, using Theorem A,
we find that
$$
\langle Ap, B\rangle=\langle A(t\zeta), B\rangle=t\langle A\zeta, B\rangle=0
$$
and $|Ap|^2=|t\zeta|^2=|p|^2$.
This implies{\setlength\arraycolsep{2pt}
\begin{eqnarray*}\frac{\|K_{\varphi(p)}\|^2}{\|K_{p}\|^2}
&=& \exp\biggl(\frac{|\varphi(p)|^2-|p|^2}{2}\biggr)
=\exp\biggl(\frac{|Ap+B|^2-|p|^2}{2}\biggr)\\
&=&\exp\biggl(\frac{|Ap|^2+2\mbox{Re}\langle Ap, B\rangle+|B|^2-|p|^2}{2}\biggr) \\
&=& \exp(|B|^2/2),
\end{eqnarray*}}
and{\setlength\arraycolsep{2pt}
\begin{eqnarray*}
\frac{|\overline{K_B(p)}|^2\|K_{\tau(p)}\|^2}{\|K_{\varphi(p)}\|^2}
&=& \exp\biggl(\frac{|\tau(p)|^2-|\varphi(p)|^2+2\mbox{Re}\langle p, B\rangle}{2}\biggl)
\\ &=& \exp\biggl(\frac{|A^*p|^2-|Ap+B|^2+2\mbox{Re}\langle p, B\rangle}{2}\biggl)
\\ &=&  \exp\biggl(\frac{|A^*p|^2-|p|^2-|B|^2+2\mbox{Re}\langle p, B\rangle}{2}\biggl).
\end{eqnarray*}}
Therefore,
{\setlength\arraycolsep{2pt}\begin{eqnarray*}
\|[C^*_\varphi, C_\varphi]k_{p}\|& \ge &\frac{\|K_{\varphi(p)}\|^2}{\|K_{p}\|^2}\biggl|1-\frac{|\overline{K_B(p)}|^2
\|K_{\tau(p)}\|^2}{\|K_{\varphi(p)}\|^2}\biggr|\\
&=&\exp(|B|^2/2)\biggl|1- \exp\biggl(\frac{|A^*p|^2-|p|^2-|B|^2+2\mbox{Re}\langle p, B\rangle }{2}\biggl)\biggr|.
\end{eqnarray*}}Because
$\limsup\limits_{t\to \infty}\|[C^*_\varphi, C_\varphi]k_{t\zeta}\|=0$,
we deduce that
$$
\lim_{t\to\infty}
\exp(|B|^2/2)\biggl|1-\exp\biggl(\frac{t^2|A^*\zeta|^2-t^2|\zeta|^2-|B|^2+2t\mbox{Re}\langle \zeta, B\rangle}{2}\biggl)\biggr|=0,
$$
that is,
$$
\lim_{t\to\infty}\exp\biggl(\frac{t^2|A^*\zeta|^2-t^2|\zeta|^2-|B|^2+2t\mbox{Re}\langle \zeta, B\rangle}{2}\biggl)=1.
$$
As a consequence, we must have $B=0$ and  $|A^*\zeta|= |\zeta|$.
This yields that  $\varphi(z)=Az$  and $C^*_\varphi =C_\tau$.

Now, we have
$$[C^*_\varphi, C_\varphi]=
C^*_\varphi C_\varphi-C_\varphi C^*_\varphi
=C_\tau C_\varphi-C_\varphi C_\tau
=C_{\varphi\circ\tau}-C_{\tau\circ\varphi}.
$$
Since $\|A\|=1$ implies that $\|A^*A\|=1$ and $\|AA^*\|=1$,
by Theorem A, both
$C_{\varphi\circ\tau}$ and $C_{\tau\circ\varphi}$ are not compact.
On the other hand, Choe et al. \cite{CCK} have shown that for holomorphic mappings $\psi \ne \phi$, the operator
$C_\psi-C_\phi$ is compact on $\mathcal{F}^2(\mathbb{C}^n)$ if and only if both  $C_\psi$ and $C_\phi$ are compact.
Thus, $[C^*_\varphi, C_\varphi]=C_{\varphi\circ\tau}-C_{\tau\circ\varphi}$ is compact must give that  $\varphi\circ\tau=\tau\circ\varphi$.
It follows then that $AA^*=A^*A$ and $C_\varphi$ is normal by Theorem 3.1. So we obtain the desired result.
\end{proof}

\section{Spectra of composition operators}\label{spectra}

When $\varphi$ is a holomorphic mapping of the complex plane $\mathbb{C}$,  Guo and Izuchi  \cite{GI} have described  the spectrum  of $C_\varphi$
on  $\mathcal{F}^2(\mathbb{C})$ as follows:
Let $\varphi(z)=az+b, |a|\le 1$ and $a\ne 1$, which inducing  a bounded composition operator $C_\varphi$ on  $\mathcal{F}^2(\mathbb{C})$.
Then $\sigma(C_\varphi)=\overline{\{a^n, n\in\mathbb{Z}_+\}}$, where  $\sigma(C_\varphi)$ denotes
the spectrum of $C_\varphi$.

In higher dimensions, we also want to know the spectral structure of $C_\varphi$ on $\mathcal{F}^2(\mathbb{C}^n)$.
In fact, when $\varphi$ is a unitary map, the spectrum of $C_\varphi$  is clear.
\\ \par
{\bf Theorem B. \cite{LZ}}
{\it Let $\varphi(z)=Uz$ with  $\{\lambda_1,\ldots,\lambda_n\}$ being   eigenvalues of the unitary matrix $U$. Then
the spectrum of $C_\varphi$  on $\mathcal{F}^2(\mathbb{C}^n)$ is the closure of the set
 $\{\lambda_1^{\alpha_1}\cdots \lambda_n^{\alpha_n}: (\alpha_1,\ldots, \alpha_n)\in \mathbb{N}^n\}$.}
\\ \par
In this section, we will completely give the spectrum of $C_\varphi$ for any bounded composition operator $C_\varphi$ on  $\mathcal{F}^2(\mathbb{C}^n)$.
The idea comes from  Bayart \cite{B10}. First, we need the following lemma.

\begin{lemma} \label{lem 4.1}
Assume that $A$ is an arbitrary $n\times n$ matrix with $\|A\|\le 1$. Then   there exists a unitary matrix $U\in\mathbb{C}^{n\times n}$ such that $UAU^*=M$ with
\begin{displaymath}
M=\left(
\begin{array}{cc}
D & 0
\\ 0 & A_1
\end{array}\right),
\end{displaymath}
where $D=\mbox{diag}(e^{i\theta_1},\ldots, e^{i\theta_s})$  and $A_1\in\mathbb{C}^{(n-s)\times (n-s)}$ is upper-triangular with all diagonal 
elements less than 1.
\end{lemma}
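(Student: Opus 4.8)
The plan is to split $\cn$ into the subspace where $A$ behaves like a unitary operator (the part of the spectrum on the unit circle) and its orthogonal complement where $A$ is a strict contraction, and to show this splitting is \emph{orthogonal} and reducing, so that $A$ becomes block diagonal in a suitable orthonormal basis. The starting observation is that $\|A\|\le 1$ forces the spectral radius of $A$ to be at most $1$, so every eigenvalue $\lambda$ satisfies $|\lambda|\le 1$; the unimodular eigenvalues $e^{i\theta_1},\dots,e^{i\theta_s}$ are precisely the ones that will fill the diagonal block $D$, while all remaining eigenvalues have modulus $<1$ and belong to $A_1$.

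The crux is an elementary contraction fact that I would establish first: if $\|A\|\le 1$ and $Av=\lambda v$ with $|\lambda|=1$, then $A^{*}v=\bar\lambda v$. This follows by expanding $\|A^{*}v-\bar\lambda v\|^{2}=\|A^{*}v\|^{2}-2\,\re(\lambda\langle A^{*}v,v\rangle)+\|v\|^{2}$ and using $\langle A^{*}v,v\rangle=\langle v,Av\rangle=\bar\lambda\|v\|^{2}$ together with $\|A^{*}v\|\le\|v\|$ to conclude the right-hand side is $\le 0$. From this identity I would extract the two structural facts I need. First, eigenspaces of $A$ for distinct unimodular eigenvalues are mutually orthogonal: pairing $Av=\lambda v$ against $A^{*}w=\bar\mu w$ gives $(\lambda-\mu)\langle v,w\rangle=0$. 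Second, and this is the delicate point, there are no Jordan chains over a unimodular eigenvalue: if $(A-\lambda)w=v\ne 0$ with $Av=\lambda v$, then computing $\langle Aw,v\rangle$ in two ways (once directly, once via $A^{*}v=\bar\lambda v$) forces $\|v\|^{2}=0$, a contradiction. Hence $A$ is diagonalizable on the span $E$ of all unimodular eigenvectors, and the full algebraic multiplicity of each unimodular eigenvalue is accounted for inside $E$.

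With $E$ in hand, each chosen eigenvector of $E$ is simultaneously an eigenvector of $A$ and of $A^{*}$, so $AE\subseteq E$ and $A^{*}E\subseteq E$, whence $AE^{\perp}\subseteq E^{\perp}$; that is, $E$ is reducing for $A$. Fixing an orthonormal eigenbasis $u_{1},\dots,u_{s}$ of $E$ produces the block $D=\mbox{diag}(e^{i\theta_1},\dots,e^{i\theta_s})$ and annihilates the off-diagonal blocks. On $E^{\perp}$ (of dimension $n-s$) the restriction is again a contraction, but now every eigenvalue has modulus strictly less than $1$, since all unimodular eigenvalues were exhausted with full multiplicity inside $E$. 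Applying Schur triangularization to $A|_{E^{\perp}}$ yields an orthonormal basis in which it is upper-triangular with diagonal entries equal to its eigenvalues, giving the block $A_{1}$ with all diagonal entries of modulus $<1$. Letting $U$ be the unitary change-of-basis matrix associated to $u_{1},\dots,u_{s}$ followed by the Schur basis of $E^{\perp}$ then gives $UAU^{*}=M$ in the stated form.

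I expect the only genuine obstacle to be the semisimplicity step, ruling out Jordan blocks at the unimodular eigenvalues, since everything else reduces to the short contraction identity, routine orthogonality bookkeeping, or the classical Schur factorization. The identity $A^{*}v=\bar\lambda v$ is exactly what makes $E$ reducing and forces the Jordan chains to collapse, so I would isolate it as a preliminary claim and build the remainder of the argument on top of it.
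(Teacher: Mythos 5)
Your proof is correct, but it goes by a genuinely different route than the paper's. The paper first applies Schur triangularization to all of $A$, listing the diagonal entries in decreasing order of modulus, and then makes a single norm observation: since $\|M^*\|=\|M\|\le 1$, the $i$-th row of the upper-triangular $M$ satisfies $|a_{ii}|^2+\sum_{k>i}|a_{ik}|^2=|M^*e_i|^2\le 1$, so a unimodular diagonal entry forces its entire row to vanish off the diagonal; applied to the first $s$ rows this simultaneously diagonalizes the top-left block and decouples it from the rest. You instead work at the level of invariant subspaces: you prove the contraction identity $Av=\lambda v$, $|\lambda|=1$ $\Rightarrow$ $A^*v=\bar\lambda v$, deduce orthogonality of distinct unimodular eigenspaces and the absence of Jordan chains at unimodular eigenvalues, conclude that the span $E$ of unimodular eigenvectors is reducing and carries a diagonal unitary part with full algebraic multiplicity, and only then apply Schur to the strict-contraction restriction $A|_{E^\perp}$. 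All the steps you flag as delicate check out (in particular the two-sided computation of $\langle Aw,v\rangle$ that collapses Jordan chains, and the fact that no unimodular eigenvalue can survive on $E^\perp$ because its eigenvector would have to lie in $E$). What your approach buys is conceptual structure: it exhibits the decomposition as the finite-dimensional instance of the unitary-part/completely-nonunitary-part splitting of a contraction, and isolates the semisimplicity of the boundary spectrum as an explicit lemma. What the paper's approach buys is brevity: the same semisimplicity and block-decoupling fall out in one line from the row-norm inequality, with no need to discuss eigenspaces or Jordan structure at all.
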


\begin{proof}
For any $A\in \mathbb{C}^{n\times n}$, by Schur Decomposition, there exist a unitary matrix $U\in  \mathbb{C}^{n\times n}$ and an upper triangular matrix $M\in \mathbb{C}^{n\times n}$ such that $$UAU^*=M.$$ Assume that $a_{11}, a_{22},\ldots, a_{nn}$ are the diagonal elements of $M$ with $|a_{11}|\ge | a_{22}|\ge \ldots\ge |a_{nn}|$. It is clear that  $a_{11}, a_{22},\ldots, a_{nn}$ are the eigenvalues of $M$. Moreover,
 $\|M\|=\|A\|\le 1$ gives $|a_{11}|\le 1$. If $\|M\|<1$, then $|a_{11}|< 1$ and  $s=0$.  The result is true.

If $\|M\|=1$, assume that $|a_{11}|= | a_{22}|= \cdots= |a_{ss}|=1$ for some $1\le s\le n$ and $M=(a_{jk})$. 
We will show that  $a_{ik}=0$ for $1\le i\le s, i<k\le n$.
Let $e_i=(0,\ldots,1,\ldots,0)^\perp$,  $i=1,\ldots, s$, the unit vectors of $\mathbb{C}^n$. Since $\|M^*\|=\|M\|=1$, we have $$|M^* e_i|^2=|a_{ii}|^2+\sum\limits_{i<k\le n}|a_{ik}|^2\le 1.$$
Combining this with  $|a_{ii}|=1$ $(i=1,\ldots, s)$, we  get $a_{ik}=0$ for $i<k\le n$. Therefore, $M$ has the form
\begin{displaymath}
M=\left(
\begin{array}{cc}
D & 0
\\ 0 & A_1
\end{array}\right),
\end{displaymath}
where $D=\mbox{diag}(a_{11}, a_{22},\ldots, a_{ss})$  and $A_1$ is  an upper-triangular matrix with the diagonal elements satisfying   $\max\limits_{s<i\le n}\{|a_{ii}|\}<1$.
\end{proof}

\begin{theorem} \label{the 4.2}
Let $\varphi$ be a holomorphic self-mapping of  $\mathbb{C}^n$. Suppose that $C_\varphi$ is bounded on $\mathcal{F}^2(\mathbb{C}^n)$.
Then $\varphi(z)=Az+B$ and the spectrum $C_\varphi$ is the closure of the set
$$
\{\lambda_1^{\alpha_1}\cdots\lambda_n^{\alpha_n}:\  (\alpha_1,\ldots,\alpha_n)\in\mathbb{N}^n\},
$$
where $\lambda_1,\ldots,\lambda_n$ are the eigenvalues of the matrix $A$.
\end{theorem}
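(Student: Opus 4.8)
The plan is to reduce to a triangular normal form, prove the two inclusions separately, and glue the blocks together with a tensor-product argument. Since the spectrum is invariant under conjugation by a unitary composition operator $C_U$ (for unitary $U$ one has $C_U C_\varphi C_U^{-1} = C_{\tilde\varphi}$ with $\tilde\varphi(z) = (U^*AU)z + U^*B$, which does not change the eigenvalues of $A$), I would first apply Lemma~\ref{lem 4.1} to choose $U$ so that $U^*AU = M = \mathrm{diag}(D, A_1)$, where $D = \mathrm{diag}(e^{i\theta_1}, \ldots, e^{i\theta_s})$ collects the unimodular eigenvalues and $A_1$ is upper-triangular with $\mathrm{spr}(A_1) < 1$. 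After this reduction I may assume $A = M$ is upper-triangular, so it suffices to compute $\sigma(C_\varphi)$ for $\varphi(z) = Mz + B$.

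For the inclusion $\overline{\{\lambda_1^{\alpha_1}\cdots\lambda_n^{\alpha_n}\}} \subseteq \sigma(C_\varphi)$, I would exploit that an affine map cannot raise polynomial degree: the finite-dimensional space $P_{\le d}$ of polynomials of degree at most $d$ is invariant under $C_\varphi$. Ordering the monomials $z^\gamma$ by total degree, the matrix of $C_\varphi|_{P_{\le d}}$ is block-triangular, its diagonal blocks being the action of the linear part $z \mapsto Mz$ on each space of homogeneous polynomials; since $M$ is upper-triangular these blocks are themselves triangular with diagonal entries $\lambda^\gamma = \lambda_1^{\gamma_1}\cdots\lambda_n^{\gamma_n}$. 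The translation $B$ and the strictly-upper entries of $M$ contribute only lower-degree terms and so do not change the eigenvalues. Hence every $\lambda^\gamma$ with $|\gamma| \le d$ is an eigenvalue of $C_\varphi$, with eigenvector a polynomial in $\mathcal{F}^2(\mathbb{C}^n)$; letting $d \to \infty$ and using that $\sigma(C_\varphi)$ is closed yields the inclusion.

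For the reverse inclusion I would use the splitting $\mathcal{F}^2(\mathbb{C}^n) \cong \mathcal{F}^2(\mathbb{C}^s) \otimes \mathcal{F}^2(\mathbb{C}^{n-s})$ coming from the block decomposition of $M$. Writing $B = (B', B'')$ accordingly, $\varphi$ splits into $\varphi'(z') = Dz' + B'$ and $\varphi''(z'') = A_1 z'' + B''$, and one checks that $C_\varphi = C_{\varphi'} \otimes C_{\varphi''}$. Boundedness is decisive on the unitary block: every vector is isometric for the unitary $D$, so the orthogonality condition of Theorem A forces $\langle D\zeta', B'\rangle = 0$ for all $\zeta'$, hence $B' = 0$ and $\varphi'$ is unitary. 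Theorem B then gives $\sigma(C_{\varphi'}) = \overline{\{e^{i(\theta_1\alpha_1 + \cdots + \theta_s\alpha_s)}\}}$, the closure of the monomials in the eigenvalues of $D$. For the contractive block, whose eigenvalues $\mu$ all satisfy $|\mu| < 1$, I would show $\sigma(C_{\varphi''}) = \{\mu^\beta\} \cup \{0\}$. Finally the tensor spectral formula $\sigma(S \otimes T) = \sigma(S)\cdot\sigma(T)$ (a product of compact sets, hence closed) recombines the blocks into $\overline{\{\lambda^\gamma\}}$, completing the proof.

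I expect the contractive block, that is the identity $\sigma(C_{\varphi''}) = \{\mu^\beta\} \cup \{0\}$, to be the main obstacle. The inclusion $\supseteq$ is the degree argument of the second paragraph, but the reverse direction must rule out any extra spectrum created by the nonzero translation $B''$. When $\|A_1\| < 1$ the operator $C_{\varphi''}$ is compact by Theorem A, so its nonzero spectrum is point spectrum and the task reduces to showing every nonzero eigenvalue is some $\mu^\beta$ — for instance through a lowest-degree analysis of an eigenfunction, or by formally conjugating $\varphi''$ to its linear part via its unique fixed point $z_* = (I - A_1)^{-1}B''$. The borderline case $\|A_1\| = 1$ with $\mathrm{spr}(A_1) < 1$, which can occur for non-normal $A_1$, is the most delicate, since compactness is lost and the argument must instead rely directly on the triangular structure together with the strict contraction of the spectral radius.
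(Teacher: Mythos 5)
Your architecture is sound and, in its second half, genuinely different from the paper's. The opening reduction is the same: conjugate by $C_U$ using Lemma~\ref{lem 4.1}, kill the first $s$ coordinates of $B$ via the boundedness condition of Theorem A, and work with $\psi(w,v)=(Dw,A_1v+B_1)$. Your forward inclusion (invariance of the polynomials of degree at most $d$, block-triangularity of $C_\psi$ on them, diagonal entries $\lambda^\gamma$) is a clean substitute for the paper's explicit eigenfunctions $w^\beta\prod_i[(v-C)^\top v(i)]^{\gamma_i}$ with $C=(I-A_1)^{-1}B_1$, and it even avoids needing $I-A_1$ invertible at that stage. For the reverse inclusion the paper does not tensor: it decomposes $\mathcal{F}^2(\mathbb{C}^n)=\bigoplus_{|\gamma|<m}H_\gamma\oplus K_m$ with $H_\gamma=\{F_\gamma(w)v^\gamma\}$, orders the $\gamma$'s so that $C_\psi$ is block upper-triangular, applies Lemma 7.17 of Cowen--MacCluer to trap $\sigma(C_\psi)$ inside the union of the diagonal-block spectra and a disk of radius $\|C_{\psi|K_m}\|$, and proves $\|C_{\psi|K_m}\|\le C\mu^{m}\to 0$. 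Your route through $C_\psi=C_{\varphi'}\otimes C_{\varphi''}$ and the Brown--Pearcy formula $\sigma(S\otimes T)=\sigma(S)\,\sigma(T)$ is legitimate and arguably more transparent --- \emph{provided} you actually compute $\sigma(C_{\varphi''})$.

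That is exactly where the gap is, and you have located it without closing it. The case $\|A_1\|=1$ with $\mathrm{spr}(A_1)<1$ is not hypothetical (a nilpotent Jordan block already realizes it, and induces a bounded non-compact $C_{\varphi''}$), and for it your proposal offers only a statement of intent. The missing ingredient is the paper's own iterate trick: since $\mathrm{spr}(A_1)<1$, one has $\|A_1^N\|<1$ for $N$ large, so $C_{\varphi''}^N=C_{\varphi''_N}$ is compact by Theorem A(b); hence $C_{\varphi''}$ is power-compact and every nonzero point of its spectrum is an eigenvalue. An eigenvalue is then pinned down by expanding an eigenfunction about the fixed point $z_*=(I-A_1)^{-1}B''$ and looking at the lowest-degree homogeneous term, which must satisfy $g_d(A_1u)=\nu g_d(u)$ and so forces $\nu=\mu^\beta$ with $|\beta|=d$. (The paper implements the same idea globally, replacing $\psi$ by $\psi_N$ via the spectral mapping theorem before estimating the tail $\|C_{\psi|K_m}\|$.) One further caution: your suggestion of ``conjugating $\varphi''$ to its linear part via its fixed point'' cannot be carried out as an operator similarity on the Fock space, because translations do not induce bounded composition operators there (Theorem A forces $B=0$ when $A=I$); it is only valid at the level of Taylor expansions of an individual eigenfunction, which is all the argument above needs. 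With the power-compactness step inserted, your proof goes through.
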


\begin{proof}
Since $C_\varphi$ is bounded on $\mathcal{F}^2(\mathbb{C}^n)$, by Theorem A,
we have  $\varphi(z)=Az+B$ with $\|A\|\le 1$. Applying Lemma 4.1,
 there exists  unitary $U$ such that $UAU^*=M$ with
\begin{displaymath}
M=\left(
\begin{array}{cc}
D & 0
\\ 0 & A_1
\end{array}\right),
\end{displaymath}
where $D=\mbox{diag}(e^{i\theta_1},\ldots, e^{i\theta_s})$  and $A_1\in\mathbb{C}^{(n-s)\times (n-s)}$ is upper-triangular. Let   $\lambda_1,\ldots,\lambda_{n-s}$ be the eigenvalues of $A_1$, then $\lambda=\max\{\lambda_1,\ldots,\lambda_{n-s}\}<1$.

Now, we compute that {\setlength\arraycolsep{2pt}
\begin{eqnarray*}C^*_U C_\varphi C_U f(z)&=&f(U\circ\varphi\circ U^*(z))
\\ &=& f(UAU^*z+UB)=f(Mz+B')
\\ &=& C_\psi f(z).
\end{eqnarray*}}That is, $C_\varphi$ is similar to $C_\psi$ with $\psi(z)=Mz+B'$ and $B'=UB$.

Note that the boundedness of  $C_\varphi$  means that
 $C_\psi$ is also bounded.
Using Theorem A,
we have $\langle M\zeta, B'\rangle=0$ for any $\zeta\in\mathbb{C}^n$
with $|M\zeta|=|\zeta|$.
Choosing $\zeta=e_i$, $i=1,\ldots, s$,
we obtain that
$$
b_1'=\cdots=b_s'=0,
$$
where $b_i'$ is the $i^{th}$ coordinate of $B'$. Thus,
$$
\psi(z)=\psi(w, v)=(Dw, A_1v+B_1)$$  for  $z=(w, v)\in\mathbb{C}^s \times \mathbb{C}^{n-s}$.

Because the spectrum is similarly invariant, we will compute the spectrum of $C_\psi$.
Let $v(i)\in\mathbb{C}^{(n-s)\times 1}$
be a non-zero eigenvector of $A_1^\top$
associated to $\lambda_i$, i.e. 
 $A_1^\top v(i)=\lambda_i v(i)$, $i=1,\ldots, n-s$.
  Since $\lambda<1$, we see that $I-A_1$ is invertible. Choose the vector $C$
such that  $C=(I-A_1)^{-1}B$, that is, $B_1-C=-A_1C$.
Set the function
$$
F(z)=F(w, v)=w^\beta [(v-C)^\top v(1)]^{\gamma_1}\cdots[(v-C)^\top v(n-s)]^{\gamma_{n-s}},
$$ where
$\beta=(\beta_1,\ldots,\beta_s)\in \mathbb{N}^s$
and
$\gamma=(\gamma_1,\ldots,\gamma_{n-s})\in \mathbb{N}^{n-s}$.
Note that  $[(v-C)^\top v(i)]^{\gamma_i}$ is a  polynomial
of $v$ with degree $\gamma_i$ and  the Fock space $\mathcal{F}^2(\mathbb{C}^n)$ contains all polynomials, then
the function $F(w, v)$ is in  $\mathcal{F}^2(\mathbb{C}^n)$. Next, we compute that
\begin{eqnarray*}
&& C_\psi F(w, v)
=F\circ \psi(w, v)= F(D w, A_1v+B_1)
\\ &&=(Dw)^\beta [(A_1v+B_1-C)^\top v(1)]^{\gamma_1}\cdots[(A_1v+B_1-C)^\top
v(n-s)]^{\gamma_{n-s}}
\\ && = e^{i(\beta_1\theta_1+\cdots\beta_s\theta_s)}w^\beta
\{[A_1(v-C)]^\top v(1)\}^{\gamma_1}\cdots\{[A_1(v-C)]^\top
v(n-s)\}^{\gamma_{n-s}}
\\ && =e^{i(\beta_1\theta_1+\cdots\beta_s\theta_s)}w^\beta
[(v-C)^\top A_1^\top v(1)]^{\gamma_1}\cdots[(v-C)^\top A_1^\top
v(n-s)]^{\gamma_{n-s}}
\\ && = e^{i(\beta_1\theta_1+\cdots\beta_s\theta_s)}
\lambda_1^{\gamma_1}\cdots\lambda_{n-s}^{\gamma_{n-s}}w^\beta
[(v-C)^\top v(1)]^{\gamma_1}\cdots[(v-C)^\top v(n-s)]^{\gamma_{n-s}}
\\ && = e^{i(\beta_1\theta_1+\cdots\beta_s\theta_s)}
\lambda_1^{\gamma_1}\cdots\lambda_{n-s}^{\gamma_{n-s}}F(w, v),
\end{eqnarray*}
Thus,  for any multi-index $(\beta,
\gamma)\in \mathbb{N}^n$, the function $F(z, w)$ is an eigenvector of $C_\psi$ associated to the eigenvalue
$e^{i(\beta_1\theta_1+\cdots\beta_s\theta_s)}\lambda_1^{\gamma_1}\cdots\lambda_{n-s}^{\gamma_{n-s}}$.
Hence, the spectrum of $C_\psi$ contains the closure of the set
$$
\{e^{i(\beta_1\theta_1+\cdots\beta_s\theta_s)}\lambda_1^{\gamma_1}\cdots\lambda_{n-s}^{\gamma_{n-s}}:
\alpha=(\beta,\gamma)\in \mathbb{N}^n \},
$$
where $e^{i\theta_1},\ldots, e^{i\theta_s}$, $\lambda_1,\ldots,
\lambda_{n-s}$ are the eigenvalues of $M$.

In fact, we  find that the spectrum of  $C_\psi$ is exactly the closure of the above set.
Next, we will prove the other direction.

Let $\psi_N=\psi\circ\cdots\circ\psi$ ($N$ times). Note that $\psi(w, v)=(D w, A_1v+B_1)$ gives $$\psi_N(w, v)=(D^N w, A_1^Nv+B_N),$$
where $B_N=(A_1^{N-1}+\cdots +A_1+I)B_1=(I-A_1^N)(I-A_1)^{-1}B_1$. Moreover,
$D^N=\mbox{diag}(e^{iN\theta_1},\ldots, e^{iN\theta_s})$  and $A_1^N$ is still an upper-triangular matrix with the diagonal elements $\lambda_1^N,\ldots,\lambda_{n-s}^N$.
Since $\lambda=\max\{\lambda_1,\ldots,\lambda_{n-s}\}<1$, it is easy to check that $A_1^N\to O$ (here, $O$ denotes the zero matrix)
and $B_N\to (I-A_1)^{-1}B_1$ as $N\to \infty$.
Thus, we may choose an integer $N>0$ large enough such that $\|A_1^N\|<1$.

Using the spectral mapping theorem, we know that $[\sigma(C_\psi)]^N=\sigma[(C_\psi)^N]=\sigma(C_{\psi_N})$.
This leads to that   we may still use $\psi(w, v)=(D w, A_1v+B_1)$ with $\|A_1\|<1$
 instead of $\psi_N(w, v)=(D^N w, A_1^Nv+B_N)$ with $\|A^N_1\|<1$ to compute the spectrum of $\sigma(C_\psi)$.

Now, we introduce a decomposition for the Fock space  $\mathcal{F}^2(\mathbb{C}^n)$.
For $\gamma\in \mathbb{N}^{n-s}$, let $H_\gamma$ be the set of all
functions $F$ in  $\mathcal{F}^2(\mathbb{C}^n)$, where $F$ may be written as
$F_\gamma(w)v^\gamma$. Set $K_m=\bigoplus^\perp_{|\gamma|\ge m}H_\gamma$ for any integer $m\ge0$.
Thus, we have a finite  orthogonal decomposition
$
\mathcal{F}^2(\mathbb{C}^n)
=\bigoplus^\perp_{|\gamma|<m}H_\gamma\bigoplus^\perp K_m.
$

On the other hand, for the set $\mathbb{N}^{n-s}$, we need an order  (see \cite{B10}):
for $\alpha,\beta\in\mathbb{N}^{n-s}$,
\begin{eqnarray*}
\alpha<\beta\Longleftrightarrow
\begin{cases}|\alpha|<|\beta|,
\\  |\alpha|\le|\beta|, \alpha_1=\beta_1,\ldots,\alpha_j=\beta_j \,
\mbox{for}\, j<j_0\,\mbox{and}\,\alpha_{j_0}<\beta_{j_0}.
\end{cases}
\end{eqnarray*}
Since $A_1$ is upper-triangular, we have
\begin{eqnarray*}
C_{\psi} F(w, v)
&=& F\circ{\psi}(w,v)
=F_\gamma(D w)(A_1v+B_1)^\gamma
\\ &=& F_\gamma(D w)\prod\limits_{j=1}^{n-s}\biggl(\lambda_j v_j
+\sum\limits_{k>j}a_{jk}v_k+b_j\biggr)^{\gamma_j}.
\end{eqnarray*}
This implies that the representation matrix of $C_\psi$  will be upper-triangular when using the above order
for the  decomposition
$\mathcal{F}^2(\mathbb{C}^n)= \bigoplus^\perp_{|\gamma|<m}H_\gamma\bigoplus^\perp K_m$.

Set $\rho(w,v)=(D w,v)$ and $\widetilde{\psi}=(D w, \widetilde{A_1}v)$ with $\widetilde{A_1}=\mbox{diag}(\lambda_1,\ldots, \lambda_{n-s})$.
It is easy to check that the composition operator $C_{\widetilde{\psi}}$ is bounded  and has a diagonal
matrix in the decomposition of $\mathcal{F}^2(\mathbb{C}^n)$.
Let $T_\gamma$ and $S_\gamma$ respectively
denote the diagonal blocks of $C_{\widetilde{\psi}}$  and $C_{\rho}$ corresponding to $H_\gamma$.
Then
$$
T_\gamma=\lambda_1^{\gamma_1}\cdots\lambda_{n-s}^{\gamma_{n-s}}S_\gamma.
$$
Note that $C_\rho$ is unitary on $\mathcal{F}^2(\mathbb{C}^n)$,
by Theorem  B, the spectrum of $C_\rho$ is the closure of the set
$$
\{e^{i(\beta_1\theta_1+\cdots\beta_s\theta_s)}:
(\beta_1,\ldots,\beta_s)\in \mathbb{N}^s \}.
$$
Using  Lemma 7.17 of \cite{CM1}, we get $\sigma(S_\gamma)\subset \sigma(C_\rho)$ and
\begin{eqnarray*}\sigma(C_{\widetilde{\psi}})& \subset &\bigcup\limits_{|\gamma|<m}
\sigma(T_\gamma)\bigcup \sigma(C_{\widetilde{\psi} |K_m})
\\ &\subset  &\bigcup\limits_{|\gamma|<m} \lambda_1^{\gamma_1}\cdots
\lambda_{n-s}^{\gamma_{n-s}}\sigma(S_\gamma)\bigcup \sigma(C_{\widetilde{\psi} |K_m}).
\end{eqnarray*}Thus, we  obtain that the spectrum of $C_{\widetilde{\psi}}$ is in  the closure of
$$
\{e^{i(\beta_1\theta_1+\cdots\beta_s\theta_s)}\lambda_1^{\gamma_1}\cdots\lambda_{n-s}^{\gamma_{n-s}}:
\alpha=(\beta,\gamma)\in \mathbb{N}^n \}.
$$

On the other hand, since the matrix of  $C_\psi$ becomes  upper-triangular in the decomposition of $\mathcal{F}^2(\mathbb{C}^n)$ we have designed before,
 the spectra for the diagonal locks of $C_\psi$ corresponding to the subspace  $H_\gamma$ are equal to those of $C_{\widetilde{\psi}}$.
We also let  $T_\gamma$
denote the diagonal blocks of $C_{\psi}$ corresponding to   $H_\gamma$.
Using   Lemma 7.17 of \cite{CM1} again,
\begin{eqnarray*}
\sigma(C_\psi)& \subset &\bigcup\limits_{|\gamma|<m}
\sigma(T_\gamma)\bigcup \sigma(C_{\psi |K_m})
\\ & \subset  & \bigcup\limits_{|\gamma|<m}\sigma(T_\gamma)\bigcup D(0, \|C_{\psi |K_m}\|),
\end{eqnarray*}
where $D(0, \|C_{\psi |K_m}\|)$ denotes  the disk with the radius $\|C_{\psi |K_m}\|$.
Therefore, if we show that $\|C_{\psi |K_m}\|$ tends to
zero as $m\to\infty$,  then $C_{\psi}$ and $C_{\widetilde{\psi}}$ have the same spectrum.

Let $A_1=T\Sigma V$ be a singular value decomposition of $A_1$, where
$T, V$ are unitary matrices of $\mathbb{C}^{(n-s)\times(n-s)}$ and $\Sigma=\mbox{diag}(\mu_1, \ldots, \mu_{n-s})$ with $\mu_1\ge \ldots\ge \mu_{n-s}$,
the non-negative square roots of the eigenvalues of $A_1^\ast A_1$.
Now $\|A_1\|<1$ yields that
$\mu_1=\|A_1\|<1$. Assume $\mu_1<\mu<1$ for a positive  constant $\mu$. Let $\phi_1(w, v)=(Dw,Vv)$,
$\psi_\Sigma(w, v)=(w, \Sigma v+T^*B_1)$ and $\phi_2(w, v)=(w, Tv)$,
then $C_\psi=C_{\phi_1}C_{\psi_\Sigma}C_{\phi_2}$.
Obviously, $C_{\phi_1}$ and $C_{\phi_2}$ are unitary operators when acting on $\mathcal{F}^2(\mathbb{C}^n)$. Furthermore,
$C_{\phi_2}$ preserves both $K_m$ and $K_m^\bot$.
Hence, it is sufficient to  prove  that $\|C_{\psi_\Sigma |K_m}\|\to 0$ as $m\to \infty$.

For $F(z)=\sum\limits_{|\gamma|\ge m}  F_\gamma(w)v^\gamma\in K_m$, we compute that{\setlength\arraycolsep{2pt}
\begin{eqnarray*}
\|C_{\psi_\Sigma} F\|^2
&=&\int_{\mathbb{C}^n} |F\circ\psi_\Sigma(w,v)|^2 e^{-\frac{1}{2}(|w|^2+|v|^2)}dwdv\\
\\
&\le & \sum\limits_{|\gamma|\ge m} \int_{\mathbb{C}^n}
|F_\gamma(w)(\Sigma v+T^*B_1)^\gamma|^2e^{-\frac{1}{2}(|w|^2+|v|^2)}dwdv\\
&= & \sum\limits_{|\gamma|\ge m} \int_{\mathbb{C}^s}|F_\gamma(w)|^2
e^{-\frac{1}{2}|w|^2}dw\int_{\mathbb{C}^{n-s}}|(\Sigma v+T^*B)^\gamma |^2 e^{-\frac{1}{2}|v|^2}dv
\end{eqnarray*}}Let $T^*B=(b_1,\ldots, b_{n-s})$, then $\Sigma v+T^*B=(\mu_1v_1+b_1,\ldots,\mu_{n-s}v_{n-s}+b_{n-s})$.
Since $\mu_1<\mu<1$, for $i=1,\ldots, n-s$, it is easy to see $|\mu_iv_i+b_i|\le \mu|v_i|$ when $|v_i|\ge M$ for large enough $M>0$.
It follows that $|(\Sigma v+T^*B)^\gamma |\le \mu^m|v^\gamma|$ off a compact subset of $\mathbb{C}^{n-s}$.
Therefore, using  the orthogonality of
$F_\gamma(w)v^\gamma$ and $F_{\gamma'}(w)v^{\gamma'}$ for $\gamma\ne \gamma'$,
{\setlength\arraycolsep{2pt}
\begin{eqnarray*}
\|C_{\psi_\Sigma} F\|^2
&\le &C\mu^{2m} \sum\limits_{|\gamma|\ge m} \int_{\mathbb{C}^s}|F_\gamma(w)|^2
e^{-\frac{1}{2}|w|^2}dw\int_{\mathbb{C}^{n-s}}|v^\gamma|^2 e^{-\frac{1}{2}|v|^2}dv\\
&= & C\mu^{2m}\int_{\mathbb{C}^n}\biggl| \sum\limits_{|\gamma|\ge m} F_\gamma(w)v^\gamma\biggr|^2
e^{-\frac{1}{2}(|w|^2+|v|^2)}dwdv
\\ &=& C\mu^{2m}  \|F\|^2,
\end{eqnarray*}}where $C$ is a sufficiently large constant.
Now $\mu<1$ gives
$$
\lim_{m\to \infty}\|C_{\psi_\Sigma |K_m}\|\le \lim\limits_{m\to \infty}C\mu^{2m}=0.
$$
The desired result holds.
\end{proof}

\section{Cyclicity of composition operators}

A bounded linear operator $T$ on a linear metric space $\mathcal{H}$
is said to be cyclic if there exists a vector $x\in\mathcal{H}$ such that
$$
\overline{\mbox{span}\{T^mx: m=0,1,\ldots\}}=\mathcal{H}.
$$
If  there exists a vector $x\in\mathcal{H}$ such that the orbit
$$
\{T^mx: m=0,1,\ldots\}
$$
is dense, then $T$ is said to be hypercyclic.
If there exists a vector $x\in\mathcal{H}$ such that the projective orbit
$$
\{\lambda T^mx: m=0,1,\ldots\,\mbox{and}\, \lambda\in \mathbb{C}\}
$$
is dense, then $T$ is said to be supercyclic. See \cite{BS97} for more information.

In this section, we study the dynamics of composition operators
on the Fock space $\mathcal{F}^2(\mathbb{C}^n)$. On the complex plane, the following result has been proved by Guo and Izuchi \cite{GI}.

\begin{proposition}\label{pro 5.1}
(a) If $\varphi(z)=az$ with $|a|=1$, then $C_\varphi$ is cyclic on the Fock space
$\mathcal{F}^2(\mathbb{C})$ if and only if $a^n\ne a$ for every $n>1$.
\\ (b)  If $\varphi(z)=az+b$ with $|a|<1$ and $a\ne 0$, then $C_\varphi$ is cyclic on $\mathcal{F}^2(\mathbb{C})$.
\end{proposition}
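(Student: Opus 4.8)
The plan is to treat the two parts separately, in each case reducing cyclicity to the distribution of the eigenvalues $a^k$ together with a uniqueness argument for a carefully chosen cyclic vector. For part (a), since $|a|=1$ the operator $C_\varphi$ is unitary, and it is diagonal with respect to the orthonormal basis of normalized monomials $e_k=z^k/\|z^k\|$, namely $C_\varphi e_k = a^k e_k$; thus its eigenvalues are exactly $\{a^k:k\ge 0\}$. I would first record the elementary equivalence $a^n\ne a$ for all $n>1$ $\iff$ $a$ is not a root of unity $\iff$ the powers $a^k$ are pairwise distinct. The easy implication is the "only if": if $a$ is a root of unity then some eigenvalue $\lambda$ is repeated (indeed of infinite multiplicity). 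Letting $E_\lambda$ be the orthogonal eigenprojection, for any $x$ one has $E_\lambda C_\varphi^m x=\lambda^m E_\lambda x$, so $E_\lambda$ sends the whole orbit into $\operatorname{span}(E_\lambda x)$, and hence the closed cyclic subspace of $x$ meets the eigenspace in at most one dimension. As that eigenspace has dimension $\ge 2$, $C_\varphi$ cannot be cyclic.

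The substance of (a) is the converse. Assuming the $a^k$ are distinct, I would produce an explicit cyclic vector $x=\sum_k c_k e_k$ with $c_k\ne 0$ chosen to decay faster than any geometric rate, for instance $c_k=e^{-k^2}$ (so that $x\in\mathcal{F}^2(\mathbb{C})$). The role of this very fast decay is the following: if $y=\sum_k y_k e_k$ is orthogonal to the orbit $\{C_\varphi^m x:m\ge 0\}$, then the numbers $w_k:=c_k\overline{y_k}$ satisfy $\sum_k w_k(a^k)^m=0$ for all $m\ge 0$ and, since $|y_k|\le\|y\|$, also $\limsup_k|w_k|^{1/k}=0$, so that $F(\zeta):=\sum_k w_k\zeta^k$ is entire. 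The relations $F(a^m)=0$ for all $m\ge 0$ then force $F\equiv 0$: the points $\{a^m\}$ are infinitely many distinct points on the unit circle and so possess an accumulation point in $\mathbb{C}$, and an entire function vanishing on a set with a finite accumulation point is identically zero. Hence all $w_k=0$, so $y=0$, and $x$ is cyclic.

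For part (b), with $0<|a|<1$, the operator $C_\varphi$ is compact by Theorem A. Here I would first exhibit the eigenstructure: because $\varphi$ is affine, $C_\varphi$ maps the polynomials of degree $\le k$ into themselves and acts there upper-triangularly with diagonal $a^0,\dots,a^k$ in the basis $1,z,\dots,z^k$; since $|a|<1$ these are distinct, so for each $k$ there is a unique (up to a scalar) polynomial eigenfunction $f_k$ of exact degree $k$ with $C_\varphi f_k=a^k f_k$. The family $\{f_k\}$ therefore spans all polynomials, a dense subspace of $\mathcal{F}^2(\mathbb{C})$. I would then take $x=\sum_k c_k f_k$ with $c_k\ne 0$ chosen so that $\sum_k |c_k|\,\|f_k\|<\infty$. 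If $y\perp\{C_\varphi^m x\}$, then $w_k:=c_k\langle f_k,y\rangle$ is absolutely summable and $\sum_k w_k(a^k)^m=0$ for all $m\ge 0$; summing the geometric series shows that the meromorphic function $M(\zeta)=\sum_k w_k/(1-a^k\zeta)$ vanishes on $|\zeta|<1$ and hence identically. Its only singularities are simple poles at $\zeta=a^{-k}$ (which march off to infinity), so the residue at each must vanish, forcing every $w_k=0$. Thus $\langle f_k,y\rangle=0$ for all $k$, and density of the polynomials gives $y=0$, so $x$ is cyclic.

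The main obstacle in both parts is the "if" direction, and within it the uniqueness step that converts orthogonality to the orbit into the vanishing of the sequence $\{w_k\}$. The decisive device is the freedom to let the coefficients of the cyclic vector decay arbitrarily fast: in (a) this promotes the annihilating generating function to an \emph{entire} function, so that the accumulation of $\{a^m\}$ on the unit circle, which is useless for a function merely analytic in the disc, becomes exploitable through the identity theorem; in (b) the corresponding role is played by the residue analysis of $M(\zeta)$. Secondary points that need care are the simplicity of each eigenvalue $a^k$ in (b), which I obtain from the triangular action on polynomials, and the justification of the term-by-term interchanges that yield the entire function $F$ and the meromorphic function $M$.
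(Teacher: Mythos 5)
Your proposal is correct, but it follows a genuinely different route from the paper's. For part (a) the paper simply cites Proposition 2.3(i) of Guo--Izuchi and gives no argument, so your self-contained proof (diagonality of $C_\varphi$ in the monomial basis, the eigenprojection obstruction when $a$ is a root of unity, and the super-geometrically decaying cyclic vector whose annihilator produces an entire function vanishing on the accumulating set $\{a^m\}$) is additional content rather than a variant. For part (b) the paper's argument is much shorter: it shows the reproducing kernel $K_z$ ($z\ne 0$) is itself a cyclic vector, using the identity $C_\varphi^m K_z = c_m K_{\overline{a}^m z}$ with $c_m\ne 0$, so that any $f$ orthogonal to the orbit satisfies $f(\overline{a}^m z)=0$ on a sequence accumulating at $0$ and hence vanishes identically. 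Your proof instead diagonalizes $C_\varphi$ on the polynomials (triangular action with distinct diagonal entries $a^k$, hence eigenfunctions $f_k$ of exact degree $k$ spanning a dense subspace), builds a cyclic vector with absolutely summable coefficients, and kills the annihilating sequence by a residue analysis of $M(\zeta)=\sum_k w_k/(1-a^k\zeta)$. Both are sound; the paper's kernel trick is slicker and exploits the specific fixed-point dynamics, while your eigenfunction/generating-function scheme is more structural, treats (a) and (b) by one unified template, and would transfer to settings where no convenient reproducing-kernel identity is available. The only points needing explicit care in a write-up are the ones you already flag: justification of the term-by-term interchanges (absolute convergence via the fast decay of $c_k$, respectively $\sum_k|c_k|\|f_k\|<\infty$) and the locally uniform convergence of the series for $M$ away from its poles $a^{-k}$, which march off to infinity.
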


\begin{proof}
(a) This result is Proposition 2.3(i) of \cite{GI}.

(b) This is an immediate result of Theorem 4.2 in \cite{GI}. Here, we present a different proof. Let $K_z(w)=\exp(\langle w,z\rangle/2)$ be the reproducing kernels for
$\mathcal{F}^2(\mathbb{C})$.
We will prove that $K_z$ is a cyclic vector
of $C_\varphi$ for any $z\ne 0$. We see that{\setlength\arraycolsep{2pt}
\begin{eqnarray*}
C^m_\varphi K_z(w) &=& K_z(\varphi_m(w))=K_z\biggl(a^mw+\frac{1-a^m}{1-a}b\biggr)
\\ &=& \exp\biggl(\biggl\langle a^mw+\frac{1-a^m}{1-a}b, z\biggr\rangle/2\biggr)
\\ &=& \exp(\langle w, \overline{a^m}z\rangle/2)\exp\biggl(\biggl\langle\frac{1-a^m}{1-a}b, z\biggr\rangle/2\biggr)
\\ &=&c_m K_{\overline{a}^mz}(w),
\end{eqnarray*}}where $c_m=\exp\biggl(\biggl\langle\frac{1-a^m}{1-a}b, z\biggr\rangle/2\biggr)$ is a constant.
If $f$ is orthogonal to the set
$$
\{C_\varphi^mK_z: m=0,1,\ldots\},
$$
then
$$
0=\langle f,C_\varphi^mK_z\rangle
=\langle f, c_m K_{\overline{a}^mz}\rangle
=\overline{c_m}f(\overline{a}^mz).
$$
Since $|a|<1$,
for any fixed $z\ne 0$, we have $\overline{a}^mz\to 0$  and
$$
c_m=\exp\biggl(\biggl\langle\frac{1-a^m}{1-a}b, z\biggr\rangle/2\biggr)
\to \exp\biggl(\biggl\langle\frac{b}{1-a}, z\biggr\rangle/2\biggr)
$$
as $m\to\infty$.
Thus, $f$ vanishes on a sequence of points on $\mathbb{C}$ with limit  $0$,
which implies that $f\equiv 0$ in $\mathbb{C}$.
Hence,  $C_\varphi$ is cyclic with the cyclic vector $K_z$ for any $z\ne 0$.
\end{proof}

It is easy to see that $\varphi$ must fix a point of $\mathbb{C}$ when $C_\varphi$
is bounded on the Fock space $\mathcal{F}^2(\mathbb{C})$.
Applying the same technique as Theorem 5.2 in \cite{GM04},
we obtain that $C_\varphi$ is not supercyclic on $\mathcal{F}^2(\mathbb{C})$.
Therefore, the cyclicity of composition operators on $\mathcal{F}^2(\mathbb{C})$ is very simple.

In order to describe the dynamics of composition operators on the Fock space$\mathcal{F}^2(\mathbb{C}^n)$,
we first give the following characterization for those symbols which inducing
bounded composition operators on $\mathcal{F}^2(\mathbb{C}^n)$.

\begin{lemma}\label{lem 5.2}
If $\varphi(z)=Az+B$ induces a bounded composition operator on $\mathcal{F}^2(\mathbb{C}^n)$, then $\varphi$ fixes a point in $\mathbb{C}^n$.
\end{lemma}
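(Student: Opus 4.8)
The plan is to recast the existence of a fixed point as a solvability question in linear algebra and then feed it the extra structural information recorded in Theorem A. A point $z_0$ is fixed by $\varphi$ exactly when $Az_0+B=z_0$, i.e. when $(I-A)z_0=B$. Thus $\varphi$ has a fixed point if and only if $B$ lies in the range of $I-A$. Since we work in finite dimensions, the range of $I-A$ is closed and equals the orthogonal complement of $\ker\!\bigl((I-A)^*\bigr)=\ker(I-A^*)$. Hence it will suffice to show that $\langle B,v\rangle=0$ for every $v$ with $A^*v=v$.

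So I would fix a vector $v$ satisfying $A^*v=v$, and the key step is to upgrade this to $Av=v$. Because $C_\varphi$ is bounded, Theorem A gives $\|A\|\le 1$, hence $\|A^*\|\le 1$ and the operator $I-AA^*$ is positive semidefinite. From $A^*v=v$ one has $|A^*v|=|v|$, so $\langle (I-AA^*)v,v\rangle=|v|^2-|A^*v|^2=0$; positivity then forces $(I-AA^*)v=0$, that is $AA^*v=v$. Substituting $A^*v=v$ into $AA^*v$ yields $Av=AA^*v=v$.

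With $Av=v$ in hand, I would apply the second clause of Theorem A to the choice $\zeta=v$: since $|A\zeta|=|Av|=|v|=|\zeta|$, the hypothesis gives $\langle A\zeta,B\rangle=\langle Av,B\rangle=0$, and because $Av=v$ this reads $\langle v,B\rangle=0$, so $B\perp v$. As $v$ ranges over all eigenvectors of $A^*$ for the eigenvalue $1$, this proves $B\perp\ker(I-A^*)$, whence $B$ lies in the range of $I-A$ and the equation $(I-A)z_0=B$ has a solution $z_0=\varphi(z_0)$.

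The only delicate point is the implication $A^*v=v\Rightarrow Av=v$; everything surrounding it is routine bookkeeping. This implication is precisely where the contraction bound $\|A\|\le 1$ from Theorem A is indispensable, since it is what converts the eigenvalue condition on $A^*$ into an isometry condition that Theorem A's orthogonality clause can act on.
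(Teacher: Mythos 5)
Your proposal is correct and follows essentially the same route as the paper: reduce to showing $B\perp\ker(I-A^*)$, upgrade $A^*v=v$ to $Av=v$ using $\|A\|\le 1$, and then invoke the orthogonality clause of Theorem A with $\zeta=v$. The only cosmetic difference is that you derive $Av=v$ from positive semidefiniteness of $I-AA^*$, whereas the paper uses the equality case of Cauchy--Schwarz; both are valid and equally routine.
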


\begin{proof}
It suffices to show $B$  belongs to the orthogonal complement of $\mbox{ker}(I-A^*)$. Suppose that $v$
 is a unit vector in $\mbox{ker}(I-A^*)$. Then $A^*v = v$ and since
$\|A\|\le 1$, we get $Av = v$ as well:
$$1\ge |Av| = |Av| |v| \ge |\langle Av, v\rangle |=  |\langle A A^*v, v\rangle | =   |\langle A^*v, A^*v\rangle | =|\langle v, v\rangle | = 1;$$
thus, $|Av|=|v|=|\langle Av, v\rangle |$ and we conclude that $Av=\lambda v$ for some constant $\lambda$. Finally, $\lambda=\langle \lambda v, v\rangle=\langle A v, v\rangle=\langle v, A^* v\rangle=\langle v, v\rangle=1$. In particular, we
have $|Av|= |v|$ and hence,
$$\langle B, v\rangle=\langle B, Av\rangle=0$$
since $C_\varphi$ is bounded.
\end{proof}

First, similar to the proof of Theorem 3.3 in \cite{JO08}, we obtain  a  necessary and sufficient condition for
unitary composition operators to be cyclic on $\mathcal{F}^2(\mathbb{C}^n)$.  Here, we omit its proof.

\begin{theorem}\label{the 5.3}
If $\varphi(z)=Uz$ and  $U$ is unitary with the eigenvalues
$e^{i\theta_1},\ldots, e^{i\theta_n}$, then $C_\varphi$ is cyclic on $\mathcal{F}^2(\mathbb{C}^n)$
if and only if $\theta_1,\ldots,\theta_n,\pi$ are rationally linearly independent.
\end{theorem}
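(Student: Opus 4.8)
The plan is to reduce to a diagonal symbol and then read off cyclicity from the distinctness of the eigenvalues of the resulting diagonal operator. First I would diagonalize $U$: since $U$ is unitary there is a unitary matrix $W$ with $U=W^*DW$, where $D=\mathrm{diag}(e^{i\theta_1},\dots,e^{i\theta_n})$. Setting $\widetilde{\varphi}(z)=Dz$ and recalling from Corollary~1 of \cite{CMS} that $C_W$ is unitary on $\mathcal{F}^2(\mathbb{C}^n)$ with $C_W^{-1}=C_{W^*}$, a one-line composition check gives $C_\varphi=C_W\,C_{\widetilde{\varphi}}\,C_W^{-1}$. Hence $C_\varphi$ is unitarily equivalent to $C_{\widetilde{\varphi}}$, and since cyclicity is preserved under unitary equivalence I may assume $\varphi(z)=Dz$. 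The monomials $\{z^\alpha:\alpha\in\mathbb{N}^n\}$ form an orthogonal basis of $\mathcal{F}^2(\mathbb{C}^n)$, and $C_\varphi z^\alpha=(Dz)^\alpha=e^{i\langle\alpha,\theta\rangle}z^\alpha$, where $\langle\alpha,\theta\rangle=\sum_{j}\alpha_j\theta_j$. Thus $C_\varphi$ is a diagonal operator in this basis with eigenvalues $\lambda_\alpha=e^{i\langle\alpha,\theta\rangle}$, $\alpha\in\mathbb{N}^n$.

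The heart of the argument is the claim that $C_\varphi$ is cyclic if and only if the $\lambda_\alpha$ are pairwise distinct. For the easy direction, suppose $\lambda_\alpha=\lambda_\beta$ for some $\alpha\ne\beta$, so the eigenspace $E=\ker(C_\varphi-\lambda_\alpha I)$ has dimension at least $2$. Because $C_\varphi$ is unitary, $E$ is reducing, so the orthogonal projection $P$ onto $E$ commutes with $C_\varphi$; for any $x$ this gives $P\,C_\varphi^m x=\lambda_\alpha^m\,Px$, whence $P$ maps the cyclic subspace of $x$ into the line $\mathbb{C}\,Px$. Since $\dim E\ge 2$, no $x$ can be cyclic. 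Next I would convert distinctness into the stated number-theoretic condition. Distinctness means $\langle\alpha-\beta,\theta\rangle\notin 2\pi\mathbb{Z}$ for $\alpha\ne\beta$; as $\alpha-\beta$ ranges over all of $\mathbb{Z}^n$, this is equivalent to: there is no $m\in\mathbb{Z}^n\setminus\{0\}$ and $k\in\mathbb{Z}$ with $\sum_j m_j\theta_j=2\pi k$. This is exactly the rational linear independence of $\theta_1,\dots,\theta_n,\pi$; the only subtlety is the parity of the coefficient of $\pi$, which is handled by doubling a relation $\sum_j m_j\theta_j=-p\pi$ to $\sum_j(2m_j)\theta_j=-2p\pi\in 2\pi\mathbb{Z}$.

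For the converse I would exhibit an explicit cyclic vector when the $\lambda_\alpha$ are distinct. Writing $e_\alpha=z^\alpha/\|z^\alpha\|$, take $x=\sum_\alpha c_\alpha e_\alpha$ with all $c_\alpha\ne 0$ and $\sum_\alpha|c_\alpha|^2<\infty$. If $y=\sum_\alpha d_\alpha e_\alpha$ is orthogonal to $\{C_\varphi^m x:m\ge0\}$, then $\sum_\alpha g_\alpha\lambda_\alpha^m=0$ for every $m\ge 0$, where $g_\alpha=c_\alpha\overline{d_\alpha}\in\ell^1$ by Cauchy--Schwarz. Fixing $\alpha_0$, multiplying by $\overline{\lambda_{\alpha_0}}^{\,m}$ and averaging over $m=0,\dots,N-1$, I let $N\to\infty$: since the $\lambda_\alpha$ lie on the unit circle, the Ces\`aro mean of $(\lambda_\alpha\overline{\lambda_{\alpha_0}})^m$ tends to $1$ if $\lambda_\alpha=\lambda_{\alpha_0}$ and to $0$ otherwise, and dominated convergence (against $\sum_\alpha|g_\alpha|<\infty$) together with distinctness isolates the single term $\alpha=\alpha_0$, giving $g_{\alpha_0}=0$. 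Hence every $d_{\alpha_0}=0$, so $y=0$ and $x$ is cyclic.

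The main obstacle is precisely this converse. When $\theta_1,\dots,\theta_n,\pi$ are rationally independent the eigenvalues $\lambda_\alpha$ are dense in the unit circle (by Kronecker--Weyl), so one cannot separate them by a finite Vandermonde system; the Ces\`aro averaging above is what turns mere distinctness into a usable separation of the infinitely many accumulating eigenvalues. Everything else---the diagonalization, the reducing-subspace argument, and the doubling step---is routine.
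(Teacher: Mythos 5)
Your proof is correct, and it follows exactly the route the paper itself points to (the paper omits the argument, deferring to Theorem 3.3 of \cite{JO08}): conjugate by $C_W$ to reduce to a diagonal unitary symbol, note that $C_\varphi$ is then diagonal on the monomial basis with eigenvalues $e^{i\langle\alpha,\theta\rangle}$, and show that cyclicity of such an operator is equivalent to pairwise distinctness of these unimodular eigenvalues, which in turn is equivalent to the rational independence of $\theta_1,\dots,\theta_n,\pi$. You correctly handle the two points where one could slip --- the Ces\`aro-averaging/dominated-convergence step needed because the eigenvalues accumulate on the circle, and the parity issue in passing from $\pi\mathbb{Z}$ to $2\pi\mathbb{Z}$ --- so your writeup in fact supplies the proof the paper leaves out.
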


For all  bounded composition operators on the Fock space $\mathcal{F}^2(\mathbb{C}^n)$, the following result shows that their are not supercyclic.

\begin{theorem}\label{the 5.4}
Let $\varphi:\mathbb{C}^n\rightarrow\mathbb{C}^n$ be  a holomorphic mapping. If $C_\varphi$ is bounded  on $\mathcal{F}^2(\mathbb{C}^n)$,
 then $C_\varphi$ is not supercyclic.
\end{theorem}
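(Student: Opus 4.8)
The plan is to convert the statement into a spectral obstruction for the adjoint and then exhibit two linearly independent eigenvectors of $C_\varphi^{*}$. The tool is the classical necessary condition for supercyclicity: if $T$ is supercyclic on a Hilbert space $H$ and $g_1,g_2$ are linearly independent eigenvectors of $T^{*}$ with $T^{*}g_i=\mu_i g_i$, then a contradiction arises. Indeed, consider the bounded surjective map $\Phi\colon H\to\mathbb{C}^2$, $\Phi(y)=(\langle y,g_1\rangle,\langle y,g_2\rangle)$. For any $x$ one has $\langle T^m x,g_i\rangle=\overline{\mu_i}^{\,m}\langle x,g_i\rangle$, so $\Phi$ sends the projective orbit $\{\lambda T^m x:\lambda\in\mathbb{C},\,m\ge 0\}$ into the set $\{\lambda(\overline{\mu_1}^{\,m}\langle x,g_1\rangle,\overline{\mu_2}^{\,m}\langle x,g_2\rangle)\}$, a countable union of complex lines whose projective directions form a non-dense subset of $\mathbb{CP}^1$ (the ratio $(\overline{\mu_1}/\overline{\mu_2})^m$ stays on a single ray or circle, up to the degenerate cases where a coordinate vanishes). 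Hence this image is not dense in $\mathbb{C}^2$, which contradicts the openness of $\Phi$ applied to a dense orbit. Therefore it suffices to produce two independent eigenvectors of $C_\varphi^{*}$.

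By Theorem A we may write $\varphi(z)=Az+B$ with $\|A\|\le 1$, and by Lemma \ref{lem 5.2} the map $\varphi$ fixes a point $z_0\in\mathbb{C}^n$. From the reproducing property one has $C_\varphi^{*}K_w=K_{\varphi(w)}$ for every $w$, so the first eigenvector is immediate: $C_\varphi^{*}K_{z_0}=K_{\varphi(z_0)}=K_{z_0}$, i.e. $K_{z_0}$ is an eigenvector with eigenvalue $1$.

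For a second, independent eigenvector I would differentiate the identity $C_\varphi^{*}K_w=K_{\varphi(w)}$ at $w=z_0$. Choose a nonzero eigenvector $\eta$ of $A$, say $A\eta=\lambda\eta$, and differentiate along the real ray $w=z_0+t\eta$. The left side yields $\tfrac12 g_\eta$, where $g_\eta(z):=\langle z,\eta\rangle K_{z_0}(z)$, while $\varphi(z_0+t\eta)=z_0+tA\eta$ makes the right side yield $\tfrac12 g_{A\eta}=\tfrac12\overline{\lambda}\,g_\eta$ (the dependence $\eta\mapsto g_\eta$ is conjugate-linear). Since $C_\varphi^{*}$ is bounded it commutes with this $w$-derivative, giving $C_\varphi^{*}g_\eta=\overline{\lambda}\,g_\eta$. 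The function $g_\eta$ lies in $\mathcal{F}^2(\mathbb{C}^n)$ because multiplying the kernel $K_{z_0}$ by the linear polynomial $\langle z,\eta\rangle$ perturbs the Gaussian-dominated integrand only by polynomial growth, and $g_\eta$ is not a scalar multiple of $K_{z_0}$ since $\langle z,\eta\rangle$ is nonconstant. Thus $K_{z_0}$ and $g_\eta$ are two linearly independent eigenvectors of $C_\varphi^{*}$, and the criterion of the first paragraph shows $C_\varphi$ is not supercyclic.

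The main obstacle is the construction of this second eigenvector when $\varphi$ has a unique fixed point, e.g. when $I-A$ is invertible, so that a single reproducing kernel does not suffice; the derivative-of-kernel computation resolves all cases uniformly, but one must justify interchanging $C_\varphi^{*}$ with the $w$-derivative and confirm that $g_\eta$ genuinely belongs to the Fock space. The remaining verifications—the eigenvalue bookkeeping $g_{A\eta}=\overline{\lambda}\,g_\eta$ and the openness of $\Phi$—are routine.
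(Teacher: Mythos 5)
Your proof is correct, and it takes a genuinely different route from the paper's. The paper argues dynamically, following \cite{GM04}: evaluation at the fixed point $p$ forces the scalars $\lambda_{n_k}$ of a convergent projective orbit to converge to $g(p)$; then, for unitary symbols, a compactness argument shows that every function in the projective orbit closure of a supercyclic vector would be univalent, while for the normalized symbol $\psi(w,v)=(Dw,A_1v+B_1)$ the convergence of the iterates on the slices $\{0\}\times E$ shows that every such function would be independent of the last $n-s$ coordinates; either conclusion contradicts density in $\mathcal{F}^2(\mathbb{C}^n)$. You instead invoke the classical spectral obstruction (see \cite{BS97}) that the adjoint of a supercyclic operator cannot have two linearly independent eigenvectors, and you produce the eigenvectors explicitly: $K_{z_0}$ at the fixed point $z_0$ supplied by Lemma 5.2, with eigenvalue $1$, and the derived kernel $g_\eta(z)=\langle z,\eta\rangle K_{z_0}(z)$ along an eigenvector $\eta$ of $A$, with eigenvalue $\overline{\lambda}$. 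The points you flag as needing care do go through: the difference quotients $(K_{z_0+t\eta}-K_{z_0})/t$ converge to $\frac{1}{2}g_\eta$ in the Fock norm by dominated convergence, the integrands being dominated by a fixed Gaussian-integrable function of polynomial-times-exponential-of-$|z|$ growth, which also shows $g_\eta\in\mathcal{F}^2(\mathbb{C}^n)$; the boundedness of $C_\varphi^*$ then lets you exchange it with the limit. The degenerate cases $\lambda=0$, $\lambda=1$, and $\langle x,g_i\rangle=0$ all still confine the image of the projective orbit to a non-dense union of complex lines (note only that for $|\lambda|\notin\{0,1\}$ the direction ratio spirals rather than staying on a ray or circle, but its closure in $\mathbb{CP}^1$ is countable plus a point, hence still non-dense). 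Your argument is shorter, treats all bounded symbols uniformly without the unitary/contractive dichotomy or any appeal to Theorem 5.3, and extends verbatim to weighted composition operators fixing a point; the paper's proof, in exchange, gives concrete information about where projective orbit closures actually live.
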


\begin{proof}
We will use the similar idea as  Theorem 5.2 in \cite{GM04} to obtain our result.

As shown in Lemma 5.2, $\varphi(z)=Az+B$ must have a fixed point $p$.
Suppose that $f$ is a supercyclic vector for $C_\varphi$.
It is clear that we must have $f(p)\ne 0$.
Assume that $f(p)=1$.
If the function $g\in\mathcal{F}^2(\mathbb{C}^n)$ is in the projective orbit of $f$ under $C_\varphi$,
then there exists a sequence $\{\lambda_{n_k}\}$ such that $\{\lambda_{n_k} C_{\varphi_{n_k}}f\}$
tends to $g$ as $k\to\infty$.
Since norm convergence implies pointwise convergence,
we get
$$
g(p)=\lim\limits_{k\to\infty}\lambda_{n_k} C_{\varphi_{n_k}}f(p)
=\lim\limits_{k\to\infty}\lambda_{n_k}f(\varphi_{n_k}(p))
=\lim\limits_{k\to\infty}\lambda_{n_k}f(p)
=\lim\limits_{k\to\infty}\lambda_{n_k}.
$$

If $\varphi(z)=Uz$.
Let $e^{i\theta_1},\ldots, e^{i\theta_n}$ be the eigenvalues  of $U$,
then  $\theta_1,\ldots,\theta_n,\pi$ are rationally linearly independent.
Otherwise, using Theorem 5.3, $C_\varphi$ is not cyclic, and then not supercyclic on $\mathcal{F}^2(\mathbb{C}^n)$.
Therefore, by extracting a subsequence, we may assume that the sequence
$\{\varphi_{n_k}(z)\}=\{U^{n_k}z\}$ converges to a map $\phi(z)=Vz$ with unitary $V$.
Now, we choose a univalent function $g$ with $g(p)\ne 0$.
For each $z\in \mathbb{C}^n$, we have
\begin{eqnarray*}
g(z)&=&\lim\limits_{k\to\infty}\lambda_{n_k} C_{\varphi_{n_k}}f(z)
=\lim\limits_{k\to\infty}\lambda_{n_k}f(\varphi_{n_k}(z))
\\ &=& g(p)\lim\limits_{k\to\infty}f(\varphi_{n_k}(z))=g(p)f(Vz).
\end{eqnarray*}
This yields that $f(z)=g\circ V^{-1}(z)/g(p)$ is univalent.
It follows that all the scalar multiplies of the $C_\varphi$ orbit of $f$ are univalent functions.
This means that  $C_\varphi$ can not be supercyclic.

For more general $\varphi(z)=Az+B$, it is enough to prove that $C_\psi$ is not  supercyclic, where $\psi(w, v)=(Dw, A_1v+B_1)$
is described in the proof of Theorem 4.2.
Note that $\psi_m(w, v)=(D^mw, A^m_1v+(A^{m-1}_1+\cdots+I)B_1)$.
Let $E$ be any compact subset of $\mathbb{C}^{n-s}$ and write
$0\times E=\{(0, v)\in \mathbb{C}^s\times\mathbb{C}^{n-s}, v\in E\}$.
Since $\lambda=\max\{\lambda_1,\ldots,\lambda_{n-s}\}<1$, as pointed in  the proof of Theorem 4.2,
  the sequence of iterates of $\psi$ tends to the point
$(0, (I-A_1)^{-1}B_1)$ uniformly on the set $0\times E$ of $\mathbb{C}^n$. Moreover, it is clear that $(0, (I-A_1)^{-1}B_1)$ is
exactly the fixed point $p$ of $\psi$.
It follows that
$$
g(0,v)
=\lim\limits_{k\to\infty}\lambda_{n_k} C_{\psi_{n_k}}f(0,v)
=\lim\limits_{k\to\infty}\lambda_{n_k}f(\psi_{n_k}(0,v))
=g(p)f(p)=g(p)
$$
for any $(0, v)\in 0\times E$.
Consequently, only functions which are independent with the last $n-s$ coordinates can be
in the closure of the projective $C_\psi$ orbit of $f$.
Therefore, $C_\psi$ is not supercyclic and the proof is complete.
\end{proof}

Finally, it is clear that $C_\varphi$ is not cyclic on the Fock space $\mathcal{F}^2(\mathbb{C}^n)$,
if $\varphi(z)=Az+B$ and the matrix $A$ is not invertible.  Now, we almost  have known the dynamics of bounded composition operators on  $\mathcal{F}^2(\mathbb{C}^n)$,
only   the following problem still open.
\\ \par
{\bf Problem:} For the mapping $\varphi(z)=Az+B$, if $A$ is invertible with $\|A\|\le 1$
and $C_\varphi$ is bounded on $\mathcal{F}^2(\mathbb{C}^n)$, is it cyclic?

\bigskip

\centerline{ ACKNOWLEDGEMENTS }

The first author  would like to think professor Zhihua Chen for helping prove Lemma 4.1.
The  authors also think other mathematicians for pointing out Proposition  3.2 and providing many good suggestions to improve this paper.

\end{document}